\def\Sym{\mathop{\mathrm{Sym}}\nolimits}
\def\Mat{\mathop{\mathrm{Mat}}\nolimits}
\def\CC{\mathbb{C}}
\def\frakg{\mathfrak{g}}
\def\frakk{\mathfrak{k}}
\def\frakq{\mathfrak{q}}
\def\fraks{\mathfrak{s}}
\def\fraku{\mathfrak{u}}
\def\BState{\State\hskip-\ALG@thistlm}
\title{Pseudoconcavity of flag domains:\\
The method of supporting cycles}
\author{T. Hayama, A. Huckleberry and Q. Latif}
\date{}
\theoremstyle{plain}
\newtheorem{theorem} {Theorem}
[section]
\newtheorem{lemma} [theorem]{Lemma}
\newtheorem{proposition}[theorem]{Proposition}
\newtheorem{corollary} [theorem]{Corollary}
\theoremstyle{definition}
\newtheorem*{definition} {Definition}
\newtheorem*{example} {Example}
\begin{document}

\title{Pseudoconcavity of flag domains:\\ The method of supporting cycles
\thanks{ \noindent
The first author was supported by JSPS KAKENHI Grant Number 16K17576.
The third author was supported in the project by the a grant in the DFG-Schwerpunkt 1388, "Darstellungstheorie". 
Part of his Jacobs University thesis  (\cite{L}), which was guided by the second author and strongly influenced by the first author, was devoted to
the results in the present paper.  The first and second author's joint work was supported by funds from the 
Korean Institute of Advanced Study (KIAS) and  the NSF during their visits to Seoul, respectively Berkeley.  We are all thankful for this support.}
}

\date{Received: date / Accepted: date}

\maketitle

\begin{abstract}
\noindent
A flag domain of a real from $G_0$ of a complex semismiple Lie group $G$ is an open
$G_0$-orbit $D$ in a (compact) $G$-flag manifold.  In the usual way one reduces to
the case where $G_0$ is simple. It is known that if $D$ possesses non-constant holomorphic functions,
then it is the product of a compact flag manifold and a Hermitian symmetric bounded domain.
This pseudoconvex case is rare in the geography of flag domains.  Here it
is shown that otherwise, i.e., when $\mathcal{O}(D)\cong \mathbb{C}$, the flag domain
$D$ is pseudoconcave.  In a rather general setting the degree of the pseudoconcavity is
estimated in terms of root invariants.  This estimate is explicitly computed for domains
in certain Grassmannians.
%\keywords{flag domains \and pseudoconcavity \and cycles}
% 
%\subclass{14M15 \and 32M05  \and 57S20}
\end{abstract}
\noindent
\section{Introduction}
Throughout $G_0$ denotes a (connected) semi-simple Lie group which is
acting on a flag manifold $Z=G/Q$ of its complexification $G$.  If $G_0$ is
not simple, then all objects considered here split into products according to
a product decomposition of $G_0$ and therefore we assume that $G_0$ is
simple.  Since $G_0$ has only finitely many orbits on $Z$ (\cite{W}), it has open 
orbits $D$ which are referred to as \emph{flag domains}.   If $G_0$ is a compact
real form of $G$, then $D=Z$, a situation that is not of interest from the point of
view of this paper.  There are also exceptional situations where $G_0$ is a non-compact
real form, but nevertheless $D=Z$ (see, e.g., \cite{FHW} for this and other background).
We also assume there that this is also not the case, i.e., we only consider flag domains  $D$ 
which are proper open subsets of $Z$.  Here we focus on certain complex geometric properties
of these domains which involve basic compact complex submanifolds which are associated
to the group actions at hand.  We begin with background information followed by a statement
of the first version of our results (Theorem \ref{weak pseudoconcavity}).  

\subsection {Convexity and concavity}
\subsubsection {Generalities}
A complex space $X$ is a Stein space if and only if the holomorphic functions on $X$ separate its
points, and given any divergent sequence $\{x_n\}$ in $X$ there exists a holomorphic function
$f\in \mathcal{O}(X)$ with $\lim \vert f(x_n)\vert =\infty $.  If only the second condition is fulfilled, then
$X$ is said to be holomorphically convex.  In this case there is a canonical proper, surjective holomorphic
map $X\to X_{red}$ onto a Stein space.  At the level of sets this \emph{Remmert reduction} is defined
by the equivalence relation $x\sim y$ whenever $f(x)=f(y)$ for all $f\in \mathcal{O}(X)$. If $\mathcal{S}$
is a coherent sheaf on $X$, or more simply a holomorphic vector bundle, and $X$ is Stein, then
all cohomology groups $H^q(X,\mathcal{S})$ vanish for $q\ge 1$.  If $X$ is holomorphically convex,
then in principle these groups can be computed from the Remmert reduction; in particular, they are
finite-dimensional.  Since many problems in complex analysis can be stated in terms of cohomological
invariants, these results are of fundamental importance (see, e.g., \cite{GR1}, \cite{GR2}).

\bigskip\noindent
The role of convexity in Stein theory can be exemplified as follows: Suppose $\Omega $ is a relatively 
compact domain with smooth boundary in a complex manifold $X$, e.g., $X=\mathbb{C}^n$.   Let us suppose 
that every point $p\in \partial \Omega $ possesses a coordinate ball $B_p$ in $X$ in which there
is a $1$-codimensional complex submanifold $H_p$ which contains $p$ but which otherwise is contained
in the complement of the closure of $\Omega $.  In this setting there exist holomorphic functions $h\in \mathcal{O}(B_p)$
which vanish exactly on $H_p$.  Thus, if $\{x_n\}$ is a sequence in $B_p\cap \Omega $ which converges to $p$ and $h$ is such a function,
then $f:=\frac{1}{h}$ is such that $lim \vert f(x_n)\vert =\infty $.  In this situation one says $\Omega $ has a supporting
holomorphic supporting hypersurfaces at each of its boundary points, a condition that is reminiscent of classical convexity.
Under further natural conditions on $X$, e.g., if $X$ itself is Stein, one can in fact show that if $\Omega $ is convex in this 
strong sense, then it is Stein.  

\bigskip\noindent
It is convenient to discuss supporting manifolds such as the hypersurfaces $H_p$ in terms of local holomorphic maps
of $d$-dimensional polydisks $\Delta =\Delta_d(r):=\{z\in \mathbb{C}^d: \vert z_i\vert \le r \ \text{for all}\  i\}$.  A $d$-dimensional
polydisk through $p$ in a complex manifold $X$ is a map $F:\Delta \to X$ which is holomorphic with $rank_z(dF)=d$
at all points $z\in \Delta $ and with $F(0)=p$.  On the convexity side we can localize the hypersurfaces $H_p$ discussed
above so that they are parameterized by such maps.   Andreotti defined the notion of \emph{pseudoconcavity} analogously.
The following definition is a bit stronger than Andreotti's (\cite{A}), but is appropriate for our present considerations.
\begin {definition}  
A connected complex manifold $X$ is said to be \emph{pseudoconcave of degree $d$} if it contains a 
relatively compact open set $Y$ so that for every $p\in \partial Y$ there exists a holomorphic map $F:\Delta_d(r)\to X$
through $p$ with boundary $F(\partial \Delta)$ contained in $Y$.
\end {definition}
\noindent
Andreotti's first theorem in this setting is that if $X$ is \emph{pseudoconcave}, i.e., of degree $d=1$, then
$H^0(X,\mathcal{S})$ is finite-dimensional for all coherent sheaves $\mathcal{S}$, i.e. finiteness of cohomology 
at level $0$. If $X$ can be smoothly exhausted by $\rho :X\to \mathbb{R} ^{\ge 0}$ whose sublevel sets 
$Y_r:=\{\rho <r\}$ are, for $r$ sufficiently large,  $d$-pseudoconcave, then finiteness of cohomology holds
for $H^q(X,\mathcal{S})$ for $q\le d-1$ (\cite{AG}). 

\subsubsection {Convexity and concavity of flag domains.}
Every Hermitian symmetric space of non-compact type (HSS) is an example of a flag domain $D_{SS}$.  Such
is canonically realized as an orbit of a real form $G_0$ (of Hermitian type) in a flag manifold $Z_{SS}$ which is the 
associated Hermitian symmetric  space of compact type.   Using this embedding of $D_{SS}$ one
realizes it as a bounded Stein domain in the orbit of an associated unipotent
group which is a copy of $\mathbb{C}^n$.   More generally, if $D$ is any $G_0$-flag domain in a
$G$-flag manifold $Z$ and $\mathcal{O}(D)\not=\mathbb{C}$, then $G_0$ is of Hermitian
type, $D$ is holomorphically convex and the Remmert reduction is given as the
(saturated) restriction of a $G$-equivariant projection $Z\to Z_{SS}$ to a $G_0$-fibration
$D\to D_{SS}$.   This situation is discussed in more detail below, but here let us
just note that the fiber $C$ of this bundle is a compact submanifold of $D$ which is canonically
associated to the group actions at hand. Since $D_{SS}$ is Stein and contractible,
the bundle is holomorphically trivial, i.e., $D$ is bundle equivalent to $D_{SS}\times C$.
Needless to say, the holomorphically convex flag domains are of a very special type.

\bigskip\noindent
The following is a first (qualitative) version of the main result of this work.
\begin {theorem}\label{weak pseudoconcavity}
A flag domain is pseudoconcave if and only if it is not holomorphically convex.
\end {theorem}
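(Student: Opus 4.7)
The plan is to prove the two implications separately, with the reverse direction being the substantive one that occupies the rest of the paper.

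\emph{Forward direction (pseudoconcave $\Rightarrow$ not holomorphically convex).} By Andreotti's finiteness theorem recalled above, pseudoconcavity of $D$ forces $\dim_{\mathbb{C}}\mathcal{O}(D)<\infty$. The structural dichotomy of Section~1.1.2 says that any holomorphically convex flag domain $D$ is $G_0$-equivariantly bundle-equivalent to $D_{SS}\times C$ with $D_{SS}$ a bounded Hermitian symmetric domain of non-compact type, and the infinite-dimensional ring of bounded holomorphic functions on $D_{SS}$ pulls back to $D$. These are incompatible, so pseudoconcavity forces $\mathcal{O}(D)=\mathbb{C}$; since $D\subsetneq Z$ is non-compact, this is the same as non-holomorphic-convexity.

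\emph{Converse direction.} Assume $\mathcal{O}(D)=\mathbb{C}$. The aim is to produce a relatively compact open set $Y\subset D$ such that every $p\in\partial Y$ lies on a holomorphic polydisk $F\colon \Delta_d(r)\to D$ with $F(0)=p$ and $F(\partial \Delta)\subset Y$. The device announced in the title is the \emph{supporting cycle}. Fix a maximal compact $K_0\subset G_0$ with complexification $K\subset G$; by a theorem of Wolf the unique closed $K$-orbit $C_0\subset D$ is a compact complex submanifold, the base cycle. The $G$-translates $g\cdot C_0$ form a family of compact cycles in $Z$; those lying in $D$ constitute the open cycle space $\Omega_D$, while the boundary $\partial\Omega_D$ consists of cycles that touch $\partial D$. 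The concrete local building blocks of the disks come from three-dimensional $SL_2$-subgroups $G_\alpha\subset G$ attached to roots $\alpha$ of $(\mathfrak{g},\mathfrak{h})$, whose orbits yield rational curves $\mathbb{P}^1\subset Z$. I would take $Y$ to be a $G_0$-invariant relatively compact neighborhood of $C_0$ in $D$ (for instance a thickening obtained by varying $g\cdot C_0$ within a small compact subset of $\Omega_D$), and at each $p\in\partial Y$ locate either a translate of $C_0$ or an $SL_2$-curve through $p$ from which the polydisk can be extracted.

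\emph{Main obstacle.} The crux is producing supporting cycles at \emph{every} boundary point of $Y$ and making essential use of the hypothesis $\mathcal{O}(D)=\mathbb{C}$. The heuristic is that holomorphic convexity creates a ``convex direction''---along the $D_{SS}$-factor---in which no rational curve can be pushed through $\partial Y$ from outside; removing this obstruction requires the non-Hermitian structure. Technically, the question reduces to a root-theoretic search: for each $p\in\partial Y$ one must produce a root $\alpha$ (or, for higher degree $d\ge 1$, a compatible family of several such roots) whose $\mathfrak{sl}_2(\mathbb{C})$-curve passes through $p$ transversely to $\partial Y$ and re-enters $Y$ away from $p$, with uniform control in $p$ so that the full boundary of the resulting disk lies inside $Y$. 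This is where the estimate in terms of root invariants promised in the abstract enters, and is the main technical content of what follows.
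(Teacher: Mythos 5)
Your forward direction is fine: Andreotti finiteness against the infinite-dimensional algebra of bounded holomorphic functions pulled back from $D_{SS}$ is exactly the right (and easy) argument. Your construction of $Y$ in the converse direction also coincides with the paper's, \emph{provided} you take the parenthetical version (the $\mu$-image of $\nu^{-1}(B)$ for a relatively compact neighborhood $B$ of $[C_0]$ in the cycle space $\mathcal{M}_D$). The phrase ``$G_0$-invariant relatively compact neighborhood of $C_0$'' must be discarded: $D$ is a single $G_0$-orbit, so the only nonempty $G_0$-invariant open subset of $D$ is $D$ itself, which is not relatively compact in $D$.

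The genuine gap is that the step you label ``main obstacle'' is the entire content of the theorem, and you neither prove it nor point at the mechanism that proves it. The hypothesis $\mathcal{O}(D)\cong\mathbb{C}$ is used in the paper at exactly one place, and not where you locate it: one must show that every supporting cycle $C$ at $p_0\in\partial Y$ meets $Y$ in a dense open subset of $C$. The argument is a transversality statement \emph{in the cycle space}, not a root-theoretic search for $\mathfrak{sl}_2$-curves through $p_0$ transverse to $\partial Y$. Concretely: for $p\in C$ consider the orbit $G_p.[C]\subset\mathcal{M}_D$ of the point-isotropy group, with tangent space $V_p\subset T_{[C]}\mathcal{M}_D$. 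If $V_p=0$ then $G_p$ stabilizes $[C]$, so $G_pK$ is parabolic and $D$ fibers over a Hermitian symmetric space, contradicting $\mathcal{O}(D)\cong\mathbb{C}$; this is where the hypothesis enters. Since the stabilizer $\widehat{K}$ of $[C]$ acts on $T_{[C]}\mathcal{M}_D$ irreducibly (or, in the case $\mathcal{M}_D\cong D_{SS}\times \overline{D}_{SS}$, with two irreducible summands each meeting $V_{p_0}$ nontrivially), the complex tangent hyperplane $H_{[C]}$ of $\partial B$ cannot contain $V_p$ for all $p$, so $V_p+H_{[C]}=T_{[C]}\mathcal{M}_D$ for a dense open set of $p\in C$; for such $p$ the orbit $G_p.[C]$ meets $B$, i.e. $p\in Y$. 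The polydisk at $p_0$ is then manufactured \emph{inside the cycle} $C$: a Borel orbit of $\widehat{K}$ is Zariski dense in $C$, the closure of an affine line through $p_0$ and a point $p\in C\cap Y$ normalizes to a rational curve, and deleting a small disk around $p$ yields a holomorphic disk through $p_0$ with boundary in $Y$. No root combinatorics are needed for this qualitative statement; the root invariants (attractiveness, $d_{ma}$) serve only the later quantitative lower bound on the degree of pseudoconcavity. As written, your proposal reproduces the setup and the easy implication but defers the one step that makes the theorem true.
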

\noindent
The sets $Y$, which display the pseudoconcavity of $D$, are constructed in such a way
that in many situations  the dimensions of the  supporting polydisks can be explicitly computed
in terms of certain root theoretic invariants (see Corollary \ref{pseudoconcave}).  This leads to refined estimates
on the degree $d$ of the pseudoconcavity.  Since the sets $Y$ can be realized as sublevel sets
of (not necessarily smooth) exhaustions,  one would hope that finiteness theorems for higher
degree cohomology can be proved.  

\bigskip\noindent
In ([Hu]) it was shown that a flag domain $D$ possesses non-constant holomorphic functions if and only if it is pseudoconvex.  This condition
is also equivalent to $D$ being holomorphic convex.  It was conjectured there that otherwise $D$ should be pseudoconcave.  With this in mind
we have the following reformulation of Theorem \ref{weak pseudoconcavity}.
\
\begin {corollary}
A necessary and sufficient condition for a flag domain to be pseudoconcave is that it is not pseudoconvex.
\end {corollary}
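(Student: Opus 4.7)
The plan is to derive this corollary as an immediate logical consequence of two equivalences already present in the excerpt. The first input is Theorem \ref{weak pseudoconcavity} itself, which asserts that a flag domain $D$ is pseudoconcave if and only if it is not holomorphically convex. The second input is the result of [Hu] cited in the paragraph preceding the corollary: for a flag domain, the three properties of possessing non-constant holomorphic functions, being pseudoconvex, and being holomorphically convex are mutually equivalent.

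My approach is then purely formal. Negating both sides of the [Hu] equivalence \emph{pseudoconvex $\iff$ holomorphically convex} yields \emph{not pseudoconvex $\iff$ not holomorphically convex}. Substituting this into the statement of Theorem \ref{weak pseudoconcavity} gives \emph{pseudoconcave $\iff$ not pseudoconvex}, which is precisely the content of the corollary. I would present the argument as a single short paragraph citing both Theorem \ref{weak pseudoconcavity} and [Hu], with no additional geometric construction required.

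The only point that warrants a brief comment is that pseudoconvexity and holomorphic convexity are genuinely distinct notions in general complex geometry, so the corollary is not a formal triviality for arbitrary complex manifolds. Its validity rests essentially on the special feature of flag domains, established in [Hu], that these notions coincide; once that coincidence is granted, the assembly is immediate. Consequently there is no real obstacle to overcome here. The substantive content lies in Theorem \ref{weak pseudoconcavity}, whose proof via supporting cycles is the actual object of the paper; the corollary merely repackages that theorem in the language suggested by the conjecture of [Hu].
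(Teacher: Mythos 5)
Your proof is correct and is exactly the paper's own argument: the paper presents this corollary as a ``reformulation'' of Theorem \ref{weak pseudoconcavity}, obtained by substituting the equivalence from \cite{Hu} (pseudoconvex $\iff$ holomorphically convex $\iff$ existence of non-constant holomorphic functions, for flag domains) into that theorem. Nothing further is needed.
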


\section {Background on cycles and cycle spaces}
The supporting polydisks for the realization of the pseudoconcavity of a given flag domain $D$
are contained in important compact submanifolds of $D$ which are defined by the group actions
at hand.  The starting point for the discussion is the fact that every maximal compact subgroup $K_0$ of $G_0$
has a unique orbit $C_0$ in $D$ which is a complex submanifold.  Such  \emph{base cycles} can also
be characterized as the unique compact orbits in $D$ of the complexification $K$ of $K_0$.   A special
case of Matsuki duality (\cite{W}) states that there is a $1-1$ correspondence between the  closed
$K$-orbits $C_0$ in $Z$ and the flag domains which contain them.  

\bigskip\noindent
Letting $q:=dim_{\mathbb{C}} C_0$, we view $C_0$ as being moved by the algebraic $G$-action on the
space $\mathcal{C}_q(Z)$ of $q$-dimensional cycles in $Z$.  For our purposes here it is enough to
consider the orbit $\mathcal{M}_Z:=G.[C_0]$ and its open subset $\mathcal{M}_D$ which is defined to
be the connected component containing $[C_0]$ of the set of cycles in $\mathcal{M}_Z$ which are contained
in $D$.  Clearly $K$ is contained in the isotropy group $G_{[C_0]}$, but this isotropy group can be larger:
If $Q=G_p$ is the isotropy group at a point $p\in C_0$, it is possible that $Q$ also stabilizes $C_0$ and
that the parabolic group $P:=QK$ is $G_{[C_0]}$ (see \cite{W} and \cite{FHW} for this and other related background
material).\\

\noindent
\textbf{Remark}\label{isotropy}
Below we describe the two ways that this can occur.  Both are in the case where $G_0$ is of Hermitian type,
i.e., where the center of $K_0$ is $1$-dimensional.  As background,  let us comment that In both the Hermitian and non-Hermitian cases $K_0$
is a maximal subgroup of $G_0$ (see \cite{Hu2} for a proof).  In the non-Hermitian case the complex Cartan decomposition $\mathfrak{g}=\mathfrak{k}\oplus \mathfrak{s}$
is such that the $\mathfrak{k}$-representation on $\mathfrak{s}$ is irreducible. Therefore the connected complex group $K$ is maximal in the
sense that no proper Lie subgroup of $G$ contains it properly.  Since $K$ is algebraic, its normalizer is likewise algebraic and consequently in the non-Hermitian case
the only proper subgroups which contain $K$ are finite extensions of it.   In the Hermitian case $\mathfrak{s}=\mathfrak{s}^-\oplus \mathfrak{s}^+$
splits into two irreducible $\mathfrak{k}$-modules with $\mathfrak{p}^-=\mathfrak{k}\oplus \mathfrak{s}^- $ and $\mathfrak{p}^+=\mathfrak{k}\oplus \mathfrak{s}^+$
being parabolic. Other than $G$, the associated parabolic subgroups $P^-$ and $P^+$ are the only Lie subgroups which properly contain $K$. In this case, finite extensions
lying between $K$ and such a parabolic $P$ are trivial, because $P/K$ is isomorphic to $\mathbb{C}^n$ which does not support a fixed point free action of
a finite group (see \cite{W} and \cite{FHW} for this and other related background material).\qed

\bigskip\noindent
\begin{example}
Recall that if $G_0$ is of Hermitian type and $D_{SS}$ is one of its Hermitian 
symmetric spaces of non-compact type,  then $D_{SS}$ is a $G_0$-flag domain in its compact 
dual $Z_{SS}$.  Since $D_{SS}$ is Stein, $C_0$ consists of just one point $p_0$ and therefore
$G_{[C_0]}=G_{p_0}=:P$ with $Z_{SS}=G/P$ and $K\subset P$. Now suppose that $Z$ is another flag manifold with
$G$-equivariant projection $\pi :Z\to Z_{SS}$ and let $D$ be a flag domain with $\pi(D)=D_{SS}$.  For the
same reason as above, the base cycle $C_0$ is mapped to a point $p_0$ with $G_{p_0}=:P$ containing
$K$. It also contains the isotropy subgroup $Q=G_{q_0}$ of any point $q_0\in C_0$, because for every $g\in Q$ Remmert's
proper mapping theorem implies that 
$\pi (g(C_0))$ is a connected compact subvariety containing $p_0$ in $D_{SS}$.  Since $D_{SS}$ is holomorphically
separable, it follows that $\pi (g(C_0))=\{p_0\}$. Thus it follows that
that $P$ is either $P^+$ or $P^-$ with $K.Q=P$. In other words
$D$ is the full $\pi $-preimage of $D_{SS}$ and the $C_0$ is just the fiber over the base point $p_0$.
In this case $\mathcal{M}_D=D_{SS}$ and $\mathcal{M}_Z=Z_{SS}$.  In this special situation we will
say that the flag domain $D$ \emph{fibers over a HSS}.
\end {example}

\bigskip\noindent
There is one other special situation where the isotropy $G_{[C_0]}$ is essentially larger than $K$.
 Here is a concrete example of how this occurs.
 
 \bigskip\noindent
 \begin {example}
 %\textbf{Example 2.} 
 Let $G_0:=SU(1,n)$ act on $Z=\mathbb{P}(\mathbb{C}^{n+1})$ as usual.   It has
 two open orbits, the set of negative $1$-dimensional subspaces, which we denote by $D_-$ 
 and which is biholomorphically equivalent to the unit Euclidean ball in $\mathbb{C}^n$, and
 the set $D_+$ of positive subspaces, i.e., the complement of the closure of $D_-$.  Let 
 $E_-:=Span\{e_1\}$, $E_+:=Span\{e_2,\ldots ,e_{n+1}\}$ and $K_0$ be the stabilizer in $G_0$ of the
 decomposition $\mathbb{C}^{n+1}=E_-\oplus E_+$.  The base cycle in $D_-$ is just the $K_0$-fixed
 point $\mathbb{P}(E_-)$ and the base cycle in $D_+$ is the projective hyperplane $C_0:=\mathbb{P}(E_+)$.
 The Borel subgroup $B$ of upper triangular matrices stabilizes $C_0$ so that $P:=BK$ is its full stabilizer.
 (This is of course just the isotropy group of the associated point in the dual projective space.) 
 \end {example}

 \bigskip\noindent
 The following is the general setting of the above example.

 \bigskip\noindent
 \textit{Example 2 (general).}  Assume that $G_0$ is of Hermitian type.  Suppose that $D$ is a flag domain in $Z$ with base cycle $C_0$ and let $p_0\in C_0$.
 Let $\pi :\hat{Z}\to Z$ be the projection from the manifold of full flags and $\hat{D}$ be a flag domain in
 $\hat{Z}$ which holomorphically maps to $D_{SS}$ (see Example 1.).  Assume that $\pi (\hat{D})=D$.  Now let $B$ be the Borel subgroup of $G$ defined by a point 
 $q_0$ in the base cycle $\hat{C}_0$ with $\pi (q_0)=p_0$.  Since $\hat{D}$ maps to the HSS $D_{SS}$ and, as we have seen above,
 $P=BK$ is the isotropy subgroup of the $1$-point cycle in $D_{SS}$, it follows in particular that $B$ stabilizes $\hat{C}_0$ and that it therefore  stabilizes $C_0$.
 In fact  $P:=B.K$ is its full stabilizer.  In this case $\mathcal{M}_Z$ is again $Z_{SS}=G/P$ and $\mathcal{M}_D$ is the HSS flag domain $D_{SS}$.
 %\qed
%
 
 \bigskip\noindent
 In Example 2 (general) the flag domain $D$ does not necessarily fiber over a HSS, but the phenomenon is more or less the same: $D$ lifts to the
 canonical domain $\hat{D}$ in the manifold of full flags which then maps down to the Hermitian symmetric space $D_{SS}$.  For the purposes of our work
 here it is only important that $\mathcal{M}_D$ is the Hermitian symmetric space  $D_{SS}$ and that $\mathcal{M}_Z=Z_{SS}$.   We then refer to such $D$ as those having 
 $\mathcal{M}_D=D_{SS}$.

 \bigskip\noindent
 If $\mathcal{M}_D$ is not a HSS, e.g., if $G_0$ is not of Hermitian type, then, as indicated above, $G_{[C_0]}$ is at 
 most a finite extension of $K$ (see Remark \ref{isotropy}).  In that setting we abuse notation and write $G_{[C]_0}=K$ with  $\mathcal{M}_Z=G/K$ and $\mathcal{M}_D$ 
 being the corresponding open subset.   Thus we have the following dichotomy.
 \begin {proposition}
 Either $\mathcal{M}_D=D_{SS}$ as in the above examples or $\mathcal{M}_Z=G/K$.
 \end {proposition}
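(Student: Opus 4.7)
The plan is to show the dichotomy by analysing the possible isotropy subgroups $H:=G_{[C_0]}$, which by definition is a closed subgroup of $G$ containing $K$, and in fact algebraic since it is the stabilizer of the algebraic cycle $[C_0]$ under the algebraic $G$-action on $\mathcal{C}_q(Z)$. The structural Remark on page above classifies the possibilities for such $H$: the connected Lie subgroups of $G$ properly containing $K$ are exactly $G$ itself in the non-Hermitian case, and $G$, $P^+$, $P^-$ in the Hermitian case. Any algebraic subgroup containing $K$ must therefore have its identity component $H^0$ among the groups $K$, $P^+$, $P^-$, $G$.

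First I would eliminate $H=G$: if $G$ were to stabilize $[C_0]$, then $C_0$ would be a $G$-invariant proper compact subvariety of the $G$-homogeneous space $Z$, which is impossible because $Z$ has no proper invariant subvarieties. Hence $H^0\neq G$.

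Next, I would treat the two remaining alternatives in turn. If $H^0=K$, then $H$ sits inside the normalizer $N_G(K)$, which by algebraicity is a finite extension of $K$. In this case we have $\mathcal{M}_Z=G/H$, and following the notational convention announced just before the statement we write this as $\mathcal{M}_Z=G/K$. The only other possibility occurs in the Hermitian case, when $H^0\in\{P^+,P^-\}$; here the Remark asserts that finite extensions between $K$ and $P^\pm$ are trivial, because $P^\pm/K\cong\mathbb{C}^n$ admits no fixed-point-free action of a nontrivial finite group. Consequently $H=H^0=P^\pm$ and $\mathcal{M}_Z=G/P^\pm=Z_{SS}$.

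The remaining step, and the main technical point, is to identify $\mathcal{M}_D$ with $D_{SS}$ in this parabolic alternative. For this I would follow the scheme of Examples 1 and 2 (general): observe that the natural $G$-equivariant identification $\mathcal{M}_Z\xrightarrow{\sim}G/P^\pm=Z_{SS}$ sends $[C_0]$ to the point $p_0$ stabilized by $P^\pm$, and carries the $G_0$-orbit of $[C_0]$ to the $G_0$-orbit of $p_0$, which by $G_0\cap P^\pm=K_0$ is precisely the bounded symmetric domain $D_{SS}=G_0/K_0$. Since $\mathcal{M}_D$ is by definition the connected component of $[C_0]$ in the open set of cycles contained in $D$, and the cycles parameterized by points of $D_{SS}$ are visibly contained in $D$ (via the projection onto $D_{SS}$ described in the examples, or, in the general case, via the lift to $\hat{D}$), the inclusion $D_{SS}\subseteq\mathcal{M}_D$ holds; the reverse inclusion follows because $G_0\cdot[C_0]\subset\mathcal{M}_Z=Z_{SS}$ is already open, connected, and $G_0$-invariant, hence equals $D_{SS}$ and exhausts the connected component. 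The main obstacle, as expected, is bookkeeping this last identification cleanly across the fibering/lifting cases of Examples 1 and 2, rather than any new geometric input.
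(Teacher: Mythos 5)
Your overall strategy --- classify the algebraic isotropy group $H:=G_{[C_0]}\supseteq K$ via the structural Remark, rule out $H=G$ because $Z$ is $G$-homogeneous and $C_0$ is proper, and then read off the two branches --- is exactly how the paper arrives at this Proposition: there it is stated as a summary of the Remark together with Examples 1 and 2, with no separate proof. So the skeleton of your argument matches the source. One small mis-citation along the way: when $H^0=P^\pm$ you conclude $H=H^0$ by invoking the Remark's statement that finite extensions lying between $K$ and $P^\pm$ are trivial. That statement concerns subgroups $K\subseteq H\subseteq P^\pm$ which are finite extensions of $K$; your $H$ is instead a finite extension lying \emph{above} $P^\pm$, so the cited fact does not apply. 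The correct (and standard) reason is that parabolic subgroups are self-normalizing, whence $H\subseteq N_G(P^\pm)=P^\pm$.

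The genuine gap is in your reverse inclusion $\mathcal{M}_D\subseteq D_{SS}$. You argue that $G_0\cdot[C_0]$ is open, connected and $G_0$-invariant, ``hence exhausts the connected component.'' This does not follow: $\mathcal{M}_D$ is itself open, connected, $G_0$-invariant and contains the orbit, and nothing in those properties forces equality --- in the paper's own $SU(1,n)$ example, all of $\mathbb{P}(\mathbb{C}^{n+1})$ is open, connected and $G_0$-invariant and properly contains the ball $D_-$. What is actually needed is that every cycle parameterized by a point of $\partial D_{SS}$ fails to be contained in $D$, so that the connected set $\mathcal{M}_D$ cannot cross $\partial D_{SS}$. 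In the fibration case (Example 1) this is immediate, since there $D=\pi^{-1}(D_{SS})$ and the cycles are precisely the fibers of $\pi$, so cycles over boundary points are disjoint from $D$. But in the general Hermitian case (Example 2), where $D$ does not fiber over $D_{SS}$, your proposed lift to $\hat{D}$ does not settle it either: $\pi(\hat{D})=D$ does not give $\pi^{-1}(D)=\hat{D}$, so containment $g(C_0)\subset D$ does not lift to containment $g(\hat{C}_0)\subset\hat{D}$. This identification of $\mathcal{M}_D$ with $D_{SS}$ in the general case is a nontrivial theorem of cycle-space theory that the paper does not reprove but takes from the literature (\cite{W}, \cite{FHW}); to close your argument you should either cite that result, as the paper implicitly does, or restrict the elementary argument to the fibration case.
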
    

\noindent
Let us conclude this paragraph by formulating the above in terms of tangent spaces of cycle spaces.
Recall that if $\mathcal{M}_D=D_{SS}=G_0/K_0$, then 
$T_{[C_0]}\mathcal{M}_Z$ is either isomorphic to $\mathfrak{s}^+$ or $\mathfrak{s}^-$ as a $K$-representation.  In particular, it is irreducible.
In the other case, i.e., where $\mathcal{M}_Z=G/K$, there are two possibilities:  If $G_0$ is not of Hermitian type, then the $K$-representation
on $T_{[C_0]}G/K$ is irreducible since $K$ is maximal subgroup of $G$.  However, if $G_0$ is of Hermitian type, then the cycle space $\mathcal{M}_D$ is the product of a $D_{SS}$ and its complex conjugate $\overline{D}_{SS}$ (\cite{BHH}).   In that case the representation of $K_0$ (resp. $K$) on the corresponding tangent space splits: 
$T_{[C_0]}\cong \mathfrak{s}^-\oplus \mathfrak{s}^+$.

\section {Related results}
In (\cite{Hu}) the notion of cycle connectivity was considered:  A flag domain $D$ is said to be \emph{cycle connected} if given any two points
$p,q\in D$ there exists a connected chain $\mathcal{C}=C_0+\ldots +C_N$ with $C_i\in \mathcal{M}_D$ so that $p\in C_0$ and $q\in C_N$. In particular it
was shown that $D$ is not holomorphically convex if and only if it is cycle connected.
One says that $D$ is  $k$-cycle connected if independent of $p,q$ the number $N$ is at most $k$.  In the $1$-connected case relatively compact neighborhoods
$Y$ of the base cycle $C_0$ can be constructed in such a way that every point of the closure of $Y$ is contained in a cycle $C_1$ which is connected to $C_0$
(\cite{Hu}).  In this way one shows that $D$ is pseudoconcave of degree one.

\bigskip\noindent
In (\cite{K}), in an even more general cycle connected setting, a finiteness theorem at the level of $H^0$ was proved.  It is interesting that an important component
of the argument in (\cite{K}) is reminiscent of Andreotti's application of Siegel's Schwarz-Lemma method for proving finiteness theorems for pseudoconcave
complex spaces.

\bigskip\noindent
In further related work  (\cite{HHL}) we study flag domains from the point of view of cycle connectivity.  In particular, applications of the method of
connecting points of $D$ to the base cycle by closures of orbits of distinguished unipotent $1$-parameter groups are given.  Certain of these results,
which were motivated by applications to Hodge theory, appear in the unpublished preprint (\cite{Ha}) and others in the thesis (\cite{L}).  Finally, in
(\cite{HHS}) the Levi geometry of $D$ is studied from the point of view of curvature invariants of the normal bundle of $C_0$.  In particular $1$-pseudoconcavity
is proved by computing the Levi-form of boundary of a neighborhoods of $C_0$ which are defined as sublevel sets of functions which are transported norm
functions on the normal bundle of $C_0$.
\section {Supporting cycles}
Here we construct the sets $Y$ which display the pseudoconcavity of a (non-holomorphically convex) flag domain $D$.
Every boundary point $p_0\in \partial Y$ is contained in at least one cycle $C$ which is contained in the closure $c\ell(Y)$ and which is
referred to as a supporting cycle at $p_0$.

\bigskip\noindent
In the setting of the previous section, let $\mathfrak{X}_Z:=\{(p,[C])\in Z\times \mathcal{M}_Z: p\in C\}$ and $\mu :\mathfrak{X}_Z\to Z$ and 
$\nu :\mathfrak{X}_Z\to \mathcal{M}_Z$ be the projections to the individual factors.
%
%\begin{center}
%\begin{tikzpicture}
 % \matrix (m) [matrix of math nodes,row sep=3em,column sep=4em,minimum width=2em] {
 %       \       &  \mathfrak{X}_Z    &   \    \\
 %       Z       &  \   &   \mathcal{M}_Z  \\};
  %\path[-stealth]
  %  (m-1-2) edge node [left] {$\mu$} (m-2-1)
  %          edge node [right] {\ $\nu$} (m-2-3);
% \end{tikzpicture}
%\end{center}
The maps $\mu $ and $\nu $ define homogeneous holomorphic $G$-bundles and are in particular open maps. Since the neutral fiber of $\nu $ is just the base cycle $C_0$, it is 
immediate that it is proper.  If we replace $Z$ by $D$ in this diagram, the resulting maps are still $G_0$-equivariant and open.  In this case it is known that $\mathcal{M}_D$ 
is a Stein manifold which is topologically a cell. Consequently, the bundle $\nu :\mathfrak{X}_D\to \mathcal{M}_D$ is trivial.

\subsection {Basic construction}
Here $B$ denotes any relatively compact domain with smooth boundary which contains the base point $[C_0]$ in $\mathcal{M}_D$.
Its preimage by the proper bundle map $\nu $ is denoted by $X$.  The relatively compact image in $D$ of $X$ by the open map $\mu $ is 
denoted by $Y$.  Its closure $c\ell(Y)$ is the $\mu $-image of $c\ell(X)$.  If $p_0\in \partial Y$, then there exists $[C]$ in $\partial B$ with
$(p_0, [C])\in \partial X$, i.e., every boundary point of $Y$ is contained in a supporting cycle. 
If $\mathcal{O}(D)\not\equiv \mathbb{C}$,
then $D$ fibers as a homogeneous bundle over a Hermitian symmetric space $\widehat{D}$ of non-compact type.  In fact the fibration $D\to \widehat{D}$
is another realization of $\nu:\mathfrak{X}_D\to \mathcal{M}_D$ and in that case $\mu $ is just the identity.  Thus in that case the supporting cycles
are completely contained in the boundary of $X$ and no pseudoconcavity observable in this construction.  On the other hand, we have the
following main result of this section.
\begin {theorem} 
\label{supporting cycle intersection}
If $D$ is a flag domain with $\mathcal {O}(D)\cong \mathbb{C}$, i.e., which does not fiber holomorphically nor anti-holomorphically over a Hermitian symmetric space
of bounded domain type, then for every supporting cycle $C$ the intersection $C\cap Y$ is open and dense in $C$.
\end {theorem}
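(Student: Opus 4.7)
\quad The approach is to show that for a supporting cycle $C$, the complement of $C\cap Y$ inside $C$ is proper, real-analytic, and hence nowhere dense. The mechanism is that for $p$ in an open dense subset of $C$, the complex submanifold $\mathcal{M}_Z(p):=\{[C']\in\mathcal{M}_Z:p\in C'\}$ is real-transverse to $\partial B$ at its contact point with the supporting cycle, so it crosses into $B$ and hence witnesses $p\in Y$. After applying a $G$-translation sending $C=gC_0$ to $C_0$ (and correspondingly replacing $B,Y$ by $g^{-1}B,g^{-1}Y$), the problem becomes: with $[C_0]\in\partial B$ and a reference point $p_0\in C_0$, show that $kp_0\in Y$ for an open dense set of $k\in K$.

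Under $\mathcal{O}(D)\cong\mathbb{C}$, the Proposition gives $\mathcal{M}_Z=G/K$, so $K$ fixes $[C_0]$ and $\mathcal{M}_Z(p_0)=Q\cdot[C_0]$, where $Q:=G_{p_0}$ is the parabolic isotropy of $p_0$. Hence $\mathcal{M}_Z(kp_0)=k\cdot\mathcal{M}_Z(p_0)$ is a smooth complex submanifold through $[C_0]$ with tangent $\Ad(k)V$, for $V:=(\mathfrak{q}+\mathfrak{k})/\mathfrak{k}\subset\mathfrak{s}:=\mathfrak{g}/\mathfrak{k}$. The central claim is the spanning identity $\Span_{\mathbb{C}}\bigcup_{k\in K}\Ad(k)V=\mathfrak{s}$. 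In the non-Hermitian case $\mathfrak{s}$ is $K$-irreducible, so this is immediate from $V\ne 0$. In the Hermitian case $\mathfrak{s}=\mathfrak{s}^+\oplus\mathfrak{s}^-$ splits into two $K$-irreducibles, and the only nontrivial invariant subspaces are $\mathfrak{s}^\pm$; the alternative $V\subset\mathfrak{s}^\pm$ is equivalent to $\mathfrak{q}\subset\mathfrak{p}^\pm$, which would factor $Z=G/Q\to G/P^\pm=Z_{SS}$ and, by the analysis of Examples~1 and~2 (general), realize $D$ in the HSS-fibering setting—contradicting $\mathcal{O}(D)\cong\mathbb{C}$.

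Given the spanning, let $T:=T_{[C_0]}\partial B$ and $T^c\subset T$ its maximal complex subspace (a complex hyperplane in $\mathfrak{s}$). For any complex subspace $U\subset\mathfrak{s}$, $U\subset T$ iff $U\subset T^c$ (a real tangency forces a complex one, since $U=iU\subset iT$ too). Thus $\{k\in K:\Ad(k)V\subset T^c\}$ is a proper complex-algebraic subvariety of $K$ (proper by the spanning), whose complement is Zariski-open and dense. For $k$ outside this bad set, $\Ad(k)V\not\subset T$, so $\mathcal{M}_Z(kp_0)$ is real-transverse to $\partial B$ at $[C_0]$ and crosses into $B$; any resulting $[C']\in\mathcal{M}_Z(kp_0)\cap B$ witnesses $kp_0\in C'\subset D$ with $[C']\in B$, i.e., $kp_0\in Y$. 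Back-translating, the image in $C$ of this open dense subset of $K$ (under the submersion $K\to C_0=Kp_0$) is open and dense, yielding density of $C\cap Y$ in $C$; openness is immediate from openness of $Y$ in $D$. The main obstacle is the Hermitian case of the spanning claim: the equivalence ``$V\subset\mathfrak{s}^\pm\Leftrightarrow\mathfrak{q}\subset\mathfrak{p}^\pm\Leftrightarrow D$ fibers over a HSS'' rests on the paper's description of $G_{[C_0]}$ in the Example~1/2 scenarios, and once it is in place the transversality argument is a standard matter.
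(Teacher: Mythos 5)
Your overall mechanism --- transversality of the fiber $\nu\mu^{-1}(p)=G_p.[C]$ to $\partial B$ at $[C]$, forced by a spanning/irreducibility property of the isotropy representation of the cycle stabilizer --- is exactly the paper's, and your handling of the two cases you do consider is sound. But your case reduction has a genuine gap: you claim that $\mathcal{O}(D)\cong\mathbb{C}$ together with the dichotomy proposition forces $\mathcal{M}_Z=G/K$, so that $K$ is the full stabilizer of $[C_0]$ and $T_{[C_0]}\mathcal{M}_Z\cong\mathfrak{g}/\mathfrak{k}=\mathfrak{s}$. That implication is false. The first branch of the dichotomy, $\mathcal{M}_D=D_{SS}$ with $G_{[C_0]}$ a parabolic $P\supsetneq K$, is \emph{not} confined to domains that fiber over a HSS: the paper's Example 2 and ``Example 2 (general)'' exhibit flag domains with $\mathcal{O}(D)\cong\mathbb{C}$ whose cycle stabilizer is nevertheless parabolic. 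The simplest instance is $G_0=SU(1,n)$ acting on $D_+\subset\mathbb{P}(\mathbb{C}^{n+1})$, the complement of the closed ball: here $\mathcal{O}(D_+)\cong\mathbb{C}$, yet the base cycle is the hyperplane $\mathbb{P}(E_+)$, whose stabilizer is $P=BK$, so that $\mathcal{M}_Z=Z_{SS}=G/P$ and $\mathcal{M}_{D_+}=D_{SS}$. Everything after your first paragraph (the identification $T_{[C_0]}\mathcal{M}_Z\cong\mathfrak{s}$ and the split into ``$\mathfrak{s}$ irreducible'' versus ``$\mathfrak{s}=\mathfrak{s}^+\oplus\mathfrak{s}^-$'') silently assumes the stabilizer is $K$, which fails here; and this omitted case is not marginal --- $D_+$ is the prototypical pseudoconcave flag domain.

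The gap is repairable by your own method, and this is in fact how the paper organizes its Proposition~\ref{irreducible}: it treats in one stroke the non-Hermitian case \emph{and} the Hermitian case with $\mathcal{M}_D\cong D_{SS}$. When $G_{[C_0]}=P$ is parabolic, the tangent space $T_{[C_0]}\mathcal{M}_Z\cong\mathfrak{g}/\mathfrak{p}$ (isomorphic to $\mathfrak{s}^+$ or $\mathfrak{s}^-$) is still $K$-irreducible, and $V:=(\mathfrak{q}_0+\mathfrak{p})/\mathfrak{p}\neq 0$: otherwise $\mathfrak{q}_0\subset\mathfrak{p}$, and the fibration $G/Q_0\to G/P$ would carry $D$ onto a flag domain whose base cycle is a single point, i.e.\ onto a bounded HSS, contradicting $\mathcal{O}(D)\cong\mathbb{C}$ (this is the same fibration argument you already use in the reducible case). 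With these two facts your transversality/spanning argument goes through verbatim with $\mathfrak{g}/\mathfrak{p}$ in place of $\mathfrak{s}$. The correct case division is therefore by the $\widehat{K}$-isotropy representation on $T_{[C]}\mathcal{M}_D$ --- irreducible (non-Hermitian, or Hermitian with $\mathcal{M}_D\cong D_{SS}$) versus reducible (Hermitian with $\mathcal{M}_D\cong D_{SS}\times\overline{D}_{SS}$) --- and not by whether $\mathcal{M}_Z=G/K$.
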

\noindent
The proof is a consequence of the results in the following paragraph.  Note that the intersection $C\cap \partial Y$ of a supporting cycle and $\partial Y$
could contain positive dimensional analytic sets. In fact in general this is the case.  However, one can always find a holomorphic disk $\Delta $
in $C$ which is at least $1$-dimensional and which displays the pseudoconcavity of at the given point.

\bigskip\noindent
\textit{Proof of Theorem \ref{weak pseudoconcavity}.}  Let $p_0\in \partial Y$ and $C$ be a supporting cycle at $p_0$.  Recall that $C$ is a flag manifold of
some conjugate of $K$.  Consequently there exists a Borel subgroup $B$ of $K$ with $B. p_0$ Zariski dense in $C$.  From the above theorem it then
follows that intersection $B.p_0\cap Y$ is non-empty.   If we identify $B.p_0$ with $\mathbb{C}^n$ we may choose a complex affine line $A$ which contains
both $p_0$ and $p \in B.p_0\cap Y$ when regarded as an affine curve in $B.p_0$.  The normalization $\hat{A}$ of the closure of $A$ in $Z$ is a copy of $\mathbb{P}_1(\mathbb{C})$.
If $U$ is a sufficiently small disk containing $p_0$, then $c\ell(U)\subset Y$ and $\Delta:=\hat{A}\setminus U$ is a holomorphic disk which displays the
concavity at the boundary point $p_0$.\qed

\subsection {Cycle space isotropy representations}
We now continue toward the proof of Theorem \ref{supporting cycle intersection}. 
For this there are three cases that must be considered.  First, if $G_0$ is of Hermitian type, then $\mathcal{M}_D$ is either the associated symmetric space $D_{SS}$  or the  
product $D_{SS}\times \overline{D}_{SS}$ of $D_{SS}$ with its complex conjugate.  If $G_0$ is not of Hermitian type, then,
just as in the case where $G_0$ is Hermitian and $\mathcal{M}_D=D_{SS}\times \overline{D}_{SS}$, the cycle space 
$\mathcal{M}_D$ is a well-studied neighborhood of the real symmetric space $G_0.C_0\cong G_0/K_0$ in the affine symmetric space $\mathcal{M}_Z=G/K$.

\bigskip\noindent
The key ingredient for the main result in the following paragraph is that if $G_0$ is of Hermitian type and $\mathcal{M}_D\cong D_{SS}$ or $G_0$ is not of
Hermitian type, then the isotropy representation of $K$ on $T_{[C_0]}\mathcal{M}_D$ is irreducible.   Since the $[C]=g([C_0])$ for some $g\in G$, the same holds
for the $g$-conjugate $\widehat{K}$ which acts transitively on $C$. 
If $G_0$ is of Hermitian type 
and 
$\mathcal{M}_D\cong D_{SS}\times \overline{D}_{SS}$,
then this representation is not irreducible. Handling this case requires closer attention.
\subsubsection {Supporting cycle property: The case of an irreducible isotropy representation}
Before going into the details of our construction, let us mention the following
basic fact.  
\begin {lemma}
For every $p\in Z$ it follows that the isotropy group 
$G_p$ acts transitively on $\{ [C]\in \mathcal{M}_Z: p\in C\}=\nu \mu^{-1}(p)$.
\end {lemma}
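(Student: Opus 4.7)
The plan is to deduce the lemma from the observation that $\mathfrak{X}_Z$ is itself a single $G$-orbit under the diagonal action on $Z\times \mathcal{M}_Z$. Once this is established, the fiber $\mu^{-1}(p)$ is a $G_p$-orbit by standard homogeneous-space nonsense, and then its image $\nu\mu^{-1}(p)$ is a $G_p$-orbit as well, which is exactly the claim.

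First I would fix a base point $(p_0,[C_0])\in \mathfrak{X}_Z$ with $p_0\in C_0$ and argue transitivity of $G$ on $\mathfrak{X}_Z$ as follows. Given $(p,[C])\in \mathfrak{X}_Z$, the assumption $\mathcal{M}_Z=G.[C_0]$ gives some $g_1\in G$ with $g_1[C_0]=[C]$. Then $g_1^{-1}p\in C_0$. Since $K\subset G_{[C_0]}$ and $K$ acts transitively on $C_0$, there is $k\in K\subset G_{[C_0]}$ with $k p_0 = g_1^{-1}p$. Consequently $g_1 k\cdot (p_0,[C_0])=(p,[C])$. This uses only the two facts already recorded in the text: $\mathcal{M}_Z$ is a $G$-orbit, and the isotropy $G_{[C_0]}$ contains $K$ (and any conjugate of $K$ that stabilizes an arbitrary cycle $[C]=g[C_0]$ acts transitively on $C=gC_0$).

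Second, since $\mathfrak{X}_Z$ is $G$-homogeneous and $\mu\colon\mathfrak{X}_Z\to Z$ is $G$-equivariant with $Z$ also $G$-homogeneous, the $\mu$-fiber over any $p\in Z$ is a single orbit of the isotropy $G_p$. Concretely, given $(p,[C_1]),(p,[C_2])\in \mu^{-1}(p)$, pick $g\in G$ with $g[C_1]=[C_2]$; then $gp\in C_2$, so by transitivity of $G_{[C_2]}$ on $C_2$ there exists $h\in G_{[C_2]}$ with $h(gp)=p$. The element $hg$ lies in $G_p$ and satisfies $(hg)[C_1]=[C_2]$. Applying $\nu$ gives the desired transitivity of $G_p$ on $\nu\mu^{-1}(p)=\{[C]\in \mathcal{M}_Z:p\in C\}$.

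There is no serious obstacle here; the only point requiring any care is the transitivity of $G_{[C]}$ on $C$, which is immediate from $G_{[C_0]}\supset K$ together with conjugation. I would therefore present the proof as a short two-line argument once the homogeneity of $\mathfrak{X}_Z$ is recorded, perhaps even packaging it as the statement that $\mathfrak{X}_Z=G/(G_{p_0}\cap G_{[C_0]})$ so that the fibers of $\mu$ and $\nu$ are visibly orbits of the respective isotropy groups.
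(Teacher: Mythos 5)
Your proposal is correct and rests on exactly the same mechanism as the paper's own proof: compose a group element carrying one cycle to another with an element of the target cycle's stabilizer (a conjugate of $K$, which acts transitively on that cycle) so that the composite fixes $p$. The paper merely organizes this differently --- it reduces at once to $p\in C_0$ and corrects $g$ by $k\in K$, setting $\hat{g}:=gk$ --- whereas you package the identical computation as homogeneity of $\mathfrak{X}_Z$ together with the standard fact that fibers of equivariant maps between homogeneous spaces are isotropy orbits; the mathematical content is the same.
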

\begin {proof}
It is enough to prove this for $p\in C_0$.  For this let $C=g(C_0)$ with $p\in C\cap C_0$
and let $\hat{p}\in C_0$ with $g(\hat{p})=p$. Since $C_0$ is a $K$-orbit, there exists
$k\in K$ with $k(p)=\hat{p}$. If $\hat{g}:=gk$, then $\hat{g}(C_0)=C$ and $\hat{g}(p)=p$.
\end {proof}

Now, turning to the construction of supporting cycles,  if $p_0\in \partial Y$, then there exists $[C]\in \partial B$ with $(p_0,[C])\in \partial X$. 
For $p\in C$ we consider the set $\nu\mu^{-1}(p)$ which by the above Lemma can be described as $\{[\hat{C}]:p\in [\hat{C}]\}=G_{p_0}([C])$. 
In particular it is a smooth complex submanifold of $\mathcal{M}_D$ at $[C]$.
We wish to compare the tangent space $T_{[C]}G_p.[C]$ and the complex tangent space $H_{[C]}$ of $\partial B$ at $[C]$.   For this recall that
$H_{[C]}$ is the maximal complex vector subspace of the real tangent space to the boundary $\partial B$ and is a $1$-codimensional complex
subspace of the full tangent space $T_{[C]}\mathcal{M}_D$. 

\bigskip\noindent
The following is a restatement of Theorem \ref{supporting cycle intersection} in the context of the present paragraph.
\begin {proposition} \label{irreducible}
Let $D$ be a flag domain which possesses no non-constant holomorphic functions and let $C$ be a boundary supporting cycle at $p_0\in \partial Y$.  
Then, if $G_0$ is not of Hermitian type or is of Hermitian type with $\mathcal{M}_D\cong D_{SS}$, it follows that $C\cap Y$ contains an open dense
subset of $C$.
\end {proposition}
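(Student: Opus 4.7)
The plan is to reduce density of $C\cap Y$ in $C$ to a single transversality statement at the boundary point $[C]\in\partial B$, and then to extract that statement from the irreducibility of the isotropy representation on $T_{[C]}\mathcal{M}_D$. By the preceding lemma, $p\in Y$ holds precisely when the orbit $G_p\cdot[C]=\nu\mu^{-1}(p)$ meets $B$; since this orbit already passes through $[C]\in\partial B$, a sufficient criterion is that it be transverse to $\partial B$ at $[C]$, i.e.\ that the complex tangent space $\Phi(p):=T_{[C]}G_p\cdot[C]$ is not contained in the complex hyperplane $H_{[C]}\subset T_{[C]}\partial B$. Since the isotropies $G_p$ are $G$-conjugate and of constant dimension, $p\mapsto\frakg_p$ is algebraic into a Grassmannian, and $\Sigma:=\{p\in C:\Phi(p)\subseteq H_{[C]}\}=\{p\in C:\frakg_p\subseteq W\}$, where $W:=\{\xi\in\frakg:\xi_{[C]}\in H_{[C]}\}$, is Zariski closed in $C$. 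As $C$ is an irreducible projective flag manifold of a conjugate of $K$, it will suffice to produce one $p^*\in C$ with $\Phi(p^*)\not\subseteq H_{[C]}$: then $\Sigma\subsetneq C$ and $C\setminus\Sigma\subseteq C\cap Y$ is automatically open and dense.

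The crux is to show that the sum $\sum_{p\in C}\Phi(p)$ equals $T_{[C]}\mathcal{M}_D$. Writing $\widetilde{K}$ for the $G$-stabilizer of $[C]$, the identity $G_{k\cdot p}=kG_pk^{-1}$ for $k\in\widetilde{K}$ yields $\Phi(k\cdot p)=k_*\Phi(p)$, so this sum is a $\widetilde{K}$-invariant subspace of $T_{[C]}\mathcal{M}_D$. Under the hypothesis of the proposition the representation on $T_{[C]}\mathcal{M}_D$ is $K$-irreducible: if $G_0$ is not of Hermitian type then $\widetilde{K}^0$ is a conjugate of $K$ and $\frakg/\frakk\cong\fraks$ is irreducible, while if $G_0$ is Hermitian with $\mathcal{M}_D\cong D_{SS}$ then $\widetilde{K}$ is a conjugate of a maximal parabolic $P^{\pm}$ with $T_{[C]}\mathcal{M}_D\cong\fraks^{\pm}$ irreducible (see the remark following the proposition on cycle space tangent representations). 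Consequently the sum is either $\{0\}$ or all of $T_{[C]}\mathcal{M}_D$, and the latter immediately gives $\Sigma\subsetneq C$.

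The main obstacle is eliminating the alternative sum $=\{0\}$, which would force $G_p\subseteq\widetilde{K}$ for every $p\in C$ and in particular $Q\subseteq\widetilde{K}$ for the parabolic $Q=G_{p_0}$. In the non-Hermitian case, $\widetilde{K}^0$ is a conjugate of the reductive group $K$; but a closed subgroup of $G$ containing a parabolic is itself parabolic, and a reductive parabolic must equal $G$, contradicting $D\neq Z$. In the Hermitian case with $\widetilde{K}$ a conjugate of $P^\pm$, the inclusion $Q\subseteq P^\pm$ produces a $G$-equivariant projection $\pi:Z=G/Q\to G/P^\pm=Z_{SS}$ that collapses $C_0$ into the $K_0$-fixed point of $D_{SS}$; the restriction $\pi|_D:D\to D_{SS}$ to the Stein HSS then provides non-constant holomorphic pull-backs, contradicting $\mathcal{O}(D)\cong\mathbb{C}$. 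This rules out the trivial alternative, so $\Sigma\subsetneq C$ and $C\cap Y$ contains the open dense set $C\setminus\Sigma$, completing the proof.
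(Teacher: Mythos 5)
Your proof is correct and takes essentially the same route as the paper: both arguments identify $p\in Y$ with transversality of $G_p\cdot[C]$ to $\partial B$ at $[C]$ (via non-containment of $T_{[C]}G_p\cdot[C]$ in the complex hyperplane $H_{[C]}$), both exploit equivariance together with irreducibility of the isotropy representation on $T_{[C]}\mathcal{M}_D$, and both rule out the degenerate alternative by exactly the paper's mechanism --- containment of the parabolic $G_{p_0}$ in the cycle stabilizer would yield a $G$-equivariant fibration of $D$ over a bounded Hermitian symmetric space, contradicting $\mathcal{O}(D)\cong\mathbb{C}$ (resp.\ maximality/reductivity of $K$ in the non-Hermitian case). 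The only differences are organizational: you package the density step as properness of a Zariski-closed degeneracy locus $\Sigma$ and rule out only the simultaneous vanishing of all $T_{[C]}G_p\cdot[C]$, whereas the paper establishes $V_p\neq 0$ pointwise and deduces density from the $\widehat{K}$-orbit structure.
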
 
\begin {proof}
If for $p\in C$ the stabilizer of $C$ contains the isotropy group $G_p$, then it contains $G_pK$ and is therefore a parabolic subgroup $P$ of $G$.
Consequently, the map $G/G_p\to G/P$ maps $D$ to a flag domain where the base cycle is just a point, i.e., to a HSS.  Since this is not the case
under the assumptions of the proposition, it follows that $G_p[C]$ is non-trivial with tangent space $V_p\not=0$ 
contained in the irreducible $\widehat{K}$-representation space $T_{[C]}\mathcal{M}_D$.  Since the 
complex tangent space $H_{[C]}$ is \emph{not} $\widehat{K}$-invariant, it follows that that there is a dense open subset
of $C$ of points of the form $p=\widehat{k}(p_0)$ with $V_p\not\subset H_{[C]}$, i.e., $V_p+H_{[C]}=T_{[C]}\mathcal{M}_D$.  
For such $p$ it follows that $V_p$ is transversal to $T_{[C]}\partial{B}$ and therefore $G_p.[C]\cap B\not=\emptyset $.  Equivalently, the
$\mu $-fiber at such a point has non-empty intersection with $X$.  In other words, $p\in Y$.
\end {proof}

\subsubsection {Supporting cycle property: The Hermitian case where $\mathcal{M}_D\cong D_{SS}\times \overline{D}_{SS}$}
Our goal here is to prove the analogous result to Proposition 2.3 in the case where $G_0$ is of Hermitian type and 
$\mathcal{M}_D=D_{SS}\times \overline{D}_{SS}$.   Here $\mathcal{M}_Z\cong G/K$, but the isotropy representation of $K$
is not irreducible.   Focusing on the supporting cycle $[C]=g([C_0])$ and the $g$-conjugate $\widehat{K}$ which stabilizes it, we
choose a maximal torus $\widehat{T}$ of $G$ which is contained in $G_{p_0}\cap \widehat{K}$.  Carrying out a root decomposition
with respect to $\widehat{\mathfrak{t}}$, it follows that 
$\mathfrak{g}=\widehat{\mathfrak{u}}_-\oplus \widehat{\mathfrak{k}}\oplus \widehat{\mathfrak{u}}_+$ where 
$\widehat{\mathfrak{p}}_\pm =\widehat{\mathfrak{k}}\oplus \widehat{\mathfrak{u}}_\pm $ are the standard parabolic algebras containing 
$\widehat{\mathfrak{k}}$.  Since $\widehat{T}\subset G_{p_0}$ the tangent space $T_{[C]}G_{p_0}.[C]$ is then realized as a subspace $V_{p_0}$
 which is a direct sum of certain $\widehat{t}$ root algebras in 
$\widehat{\mathfrak{u}}_-\oplus \widehat{\mathfrak{u}}_+$ .  If $V_{p_0}$ were
contained in one or the other of $\widehat{\mathfrak{u}}_\pm $, then $G_{p_0}$ would be contained in one or the other of the standard
parabolic subgroups $P_\pm $.  Denoting this parabolic group by $P$, this would result in a fibration $G/G_{p_0}\to G/P$.  But the
$G_0$-orbit at the point $P$ in the base is an associated Hermitian symmetric space of bounded type.  This is the essential observation
for the proof of the following version of Theorem \ref{supporting cycle intersection} in the context of the present paragraph.
\begin {proposition}
If $D$ is a $G_0$-flag domain with $\mathcal{O}(D)\cong \mathbb{C}$ and $G_0$ is of Hermitian type with 
$\mathcal{M}_D\cong D_{SS}\times \overline{D}_{SS}$,  then every supporting cycle $C$ has the property
that $C\cap Y$ is an open dense subset of $C$.
\end {proposition}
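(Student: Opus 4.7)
The plan is to mimic the argument of Proposition~\ref{irreducible} with the key adjustment that, although $T_{[C]}\mathcal{M}_D = \widehat{\mathfrak{u}}_-\oplus \widehat{\mathfrak{u}}_+$ is now reducible as a $\widehat{K}$-module, the subspace $V_{p_0}:=T_{[C]}G_{p_0}.[C]$ will still be shown to generate the entire tangent space under the $\widehat{K}$-action. As in the irreducible case, the whole task reduces to exhibiting a dense open set of points $p\in C$ for which $V_p:=T_{[C]}G_p.[C]$ is not contained in $H_{[C]}$: since $V_p$ is a complex subspace and $H_{[C]}$ is the maximal complex subspace of the real tangent space $T_{[C]}\partial B$, non-inclusion forces the complex submanifold $G_p.[C]=\nu\mu^{-1}(p)$ to be transversal to $\partial B$ at $[C]$ and hence to meet $B$, which translates to $p\in Y$.

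To exploit the transitive $\widehat{K}$-action on $C$, I would write $p=\widehat{k}(p_0)$ with $\widehat{k}\in \widehat{K}$ and use that $\widehat{K}$ fixes $[C]$ to identify $V_p=d\widehat{k}|_{[C]}(V_{p_0})$. The question then becomes whether the $\widehat{K}$-orbit of $V_{p_0}$ in the Grassmannian of subspaces of $T_{[C]}\mathcal{M}_D$ is confined to subspaces of the hyperplane $H_{[C]}$. The crucial input is supplied by the discussion immediately preceding the proposition: were $V_{p_0}$ entirely inside either $\widehat{\mathfrak{u}}_-$ or $\widehat{\mathfrak{u}}_+$, then $G_{p_0}$ would sit inside the corresponding standard parabolic $\widehat{P}_\pm$, and the associated $G$-equivariant map $D\to G/\widehat{P}_\pm$ would produce a non-trivial fibration onto a HSS of bounded type, contradicting $\mathcal{O}(D)\cong \mathbb{C}$. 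Hence $V_{p_0}$ projects non-trivially to each of the two irreducible $\widehat{K}$-summands, and the $\widehat{K}$-submodule it generates is all of $T_{[C]}\mathcal{M}_D$.

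I expect the main obstacle to be precisely this step---converting the analytic hypothesis $\mathcal{O}(D)\cong \mathbb{C}$ into the algebraic fact that $V_{p_0}$ sits in sufficiently general position relative to the decomposition $\widehat{\mathfrak{u}}_-\oplus \widehat{\mathfrak{u}}_+$. Granting it, the remainder is a standard propagation argument: if $V_p\subset H_{[C]}$ were to hold for $p$ in some non-empty open subset of $C$, then, since the inclusion is algebraic in $\widehat{k}\in \widehat{K}$, it would hold for every $\widehat{k}\in \widehat{K}$, forcing the $\widehat{K}$-submodule generated by $V_{p_0}$ to lie in $H_{[C]}$. This contradicts $\dim_{\mathbb{C}} H_{[C]}=\dim_{\mathbb{C}} T_{[C]}\mathcal{M}_D - 1$, so the set $\{p\in C:V_p\not\subset H_{[C]}\}$ is a Zariski open, hence dense open, subset of $C$; by the first paragraph it is contained in $Y$, which gives the proposition.
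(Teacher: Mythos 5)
Your proposal is correct and takes essentially the same route as the paper's proof: the fibration argument converts $\mathcal{O}(D)\cong\mathbb{C}$ into non-containment of $V_{p_0}$ in either $\widehat{\mathfrak{u}}_+$ or $\widehat{\mathfrak{u}}_-$, irreducibility of the two summands then makes the $\widehat{K}$-span of $V_{p_0}$ all of $T_{[C]}\mathcal{M}_D$, and transversality with $\partial B$ gives the open dense set $C\cap Y$. The one step to tighten is your inference from \emph{non-trivial projection to each summand} to \emph{the generated submodule is everything}: in general this needs $\widehat{\mathfrak{u}}_+\not\cong\widehat{\mathfrak{u}}_-$ as $\widehat{K}$-modules (true here, since the center of $\widehat{K}$ acts by opposite characters), or, as the paper implicitly does, one uses that $V_{p_0}$ is $\widehat{T}$-invariant and hence a sum of root spaces, so non-containment already forces non-trivial \emph{intersection} with each nilradical.
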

\begin {proof}
The $\widehat{K}$-representations on $\widehat{\mathfrak{u}}_+$ and $\widehat{\mathfrak{u}}_-$ are irreducible.
As was remarked above, since $\mathcal{O}(D)\cong \mathbb{C}$, it follows that both of these nilradicals have non-trivial
intersection with $V_{p_0}$.   Thus $Span\{\widehat{k}(V_{p_0}): \widehat{k}\in \widehat{K}\}$ is the full tangent space
$T_{[C]}\mathcal{M}_D\cong \widehat{\mathfrak{u}}_-\oplus \widehat{\mathfrak{u}}_+$.  Since $\widehat{k}(V_{p_0})=V_{\widehat{k}(p_0)}$,
it follows that for any given complex hyperplane $H$ in $T_{[C]}\mathcal{M}_D$ the set of points $p=\widehat{k}(p_0)$ in $C$
for which $H+V_{p}=T_{[C]}\mathcal{M}_D$ is open and dense in $[C]$.  As in the previous case, this transversality
implies that $p\in Y$.
\end {proof}
\section {A lower bound on the pseudoconcavity}
Let $Y$ be as above and denote by $\partial Y$ the boundary of its interior. If $p_0\in \partial Y$, then
every supporting cycle $C$ at $p_0$ displays a certain degree $c(C,p_0)$ of pseudoconcavity of $Y$ at $p_0$
in the sense that there is a $c(C,p_0)$-dimensional polydisk $\Delta $ contained in $C$ and centered at $p_0$ 
with the property that $\Delta \setminus \{p_0\}$ is contained in $C$.  Furthermore, $c(C,p_0)$ is maximal
with this property.  Then the \emph{supporting cycle pseudoconcavity} of $Y$ at $p_0$ is defined to 
be the maximum the $c(C,p_0)$ as $C$ runs through all supporting cycles containing $p_0$.  Finally,
the pseudoconcavity of $Y$ is defined to be the minimum $c(Y)$ of 
the $c(p_0)$ as $p_0$ runs through $\partial Y$.  As the reader will note, this pseudoconcavity has
very little to do with the exact nature of $Y$, i.e., with the choice of the domain $B$ in $\mathcal{M}_Z$.  In any case the estimate
$c(Y)>d_{ma}$(Theorem \ref{main} and Corollary \ref{pseudoconcave}), which we prove below in the equal rank case, is in terms of the root theoretic invariant
$d_{ma}$ which only depends on the closed $K$-orbit in $Z$. \\

Let us emphasize that we restrict here to the equal rank case, i.e., where the maximal tori of $K$ are also maximal
tori of $G$.  Let $p_0\in \partial Y$ and $[C]$ be a supporting cycle in $c\ell(Y)$ at $p_0$.  The
$G$-isotropy group at $[C]$ is either the appropriate conjugate of $K$ or of a certain
parabolic subgroup $P$ of $G$ which contains $K$.  We denote this group by $J$ and
let $H$ be a complex hyperplane in $\mathfrak{g}/\mathfrak{j}$.  In our discussion here
the conjugation plays no role at all and consequently we may assume that $\mathfrak{j}$ 
is either $\mathfrak{k}$ or $\mathfrak{p}$, depending on the case at hand.   Both contain
a maximal  toral subalgebra $\mathfrak{t}$ which, due to our assumption of maximal rank, is a 
maximal torus in both $\mathfrak{g}$ as well.  The root decompositions which are used
below are with respect to $\mathfrak{t}$. For $\delta $ a $\mathfrak{t}$-root we let $e_\delta $
be such that $\mathbb{C}e_\delta =\mathfrak{g}_\delta $.

\bigskip\noindent
The hyperplane $H$ will be viewed as being in $\mathfrak{g}$ containing $\mathfrak{j}+\mathfrak{q}_0$
where $\mathfrak{q}_0=Lie(Q_0)$ and $Q_0$ is the $G$-isotropy group $G_{p_0}$.  We let 
$\oplus \mathfrak{g}_\alpha=:A$, $\alpha \in \Phi $,  be the $\mathfrak{t}$-invariant complement 
of $\mathfrak{j}+\mathfrak{q}_0$ with $(e_\alpha :\alpha \in \Phi )$ its basis. Denote its dual basis by 
$(f_\alpha:\alpha\in \Phi)$ and let $L=\sum a_\alpha f_\alpha $ be a linear function which has $H$
as its $0$-set.

\bigskip\noindent
Given $p\in C$ we are interested in whether or not the orbit $G_p.[C]$ is tangent to $H$.  For our purposes
it is only necessary to discuss this matter locally near $p_0$; in particular with $p\in U^-.p_0$ where $U^-$ is the opposite
of the unipotent radical of the isotropy group $K_{p_0}$.  For this we identify this orbit with $\mathfrak{u}^-=\oplus \mathfrak{g}_\beta $
for $\beta $ running in the roots $\Lambda (\mathfrak{u}^-)$.  Finally, we let $E:=\oplus \mathfrak{g}_\gamma$, $\gamma \in \Gamma $, be
the $\mathfrak{t}$-root complement of $\mathfrak{j}\cap \mathfrak{q}_0$ in $\mathfrak{q}_0$.
If $exp(\xi )\in U^-$ with $p=Ad(exp(\xi)).p_0$, then $T_{[C]}G_p.[C]\subset H$ if and only if $L(Ad(exp(\xi))(E)))=0$.  Let us refer to
$\{\xi \in A: Ad(exp(\xi))(E)))=0\}$ as the \emph{boundary variety} at $p_0$ and denote it by $S_L$.
Since $Ad(exp(\xi))=e^{ad(\xi)}$, the system of defining equations for $S_L$ which consists of somewhat complicated non-linear equations and is
not easily understandable.Therefore we will be satisfied with determining a linear outer approximation $\widehat{S}_L$ which contains $S_L$. 

\subsection {Linearization}
Let $X$ be a vector field defined by a $1$-parameter group $g_X(t)=exp(\xi t)$ with $\xi=\sum \xi_\beta e_\beta \in \mathfrak{u}^-$. 
The $g_X(t)$-conjugate of the $G_{p_0}$-orbit at $[C]$ is no longer in $H$ if $[X,W]\not\in H$ for some field $W$ corresponding
to an element of $\eta \in E$.  In other words, $Ad(exp(\xi t))(\eta )$ is a curve which satisfies $L(Ad(exp(\xi t)(\eta )))\not=0$ for $t\not=0$
sufficiently small.   Thus if $g_X(t)$ maps the tangent space to the $G_{p_0}$-orbit to a space which is still in $H$, then
$L(exp(\xi t)(\eta ))=0$ for all $\eta \in E$.   Since $L$ is linear, we have an elementary linear approximation.
\begin {proposition} 
In the coordinates defined by $\mathfrak{u}^-$ the boundary variety $S_L$ is contained in
$$
\widehat{S}_L:=\{\xi \in A: L([\xi,\eta])=0 \ \text{for all}\ \eta \in E\}\,.
$$
\end {proposition}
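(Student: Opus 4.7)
The plan is to carry out a direct Taylor expansion. Given $\xi$ and $\eta \in E$, use $\Ad(\exp(\xi)) = e^{\ad(\xi)}$ to write
\begin{equation*}
L\bigl(\Ad(\exp(\xi))(\eta)\bigr) \;=\; L(\eta) \;+\; L([\xi,\eta]) \;+\; \tfrac{1}{2}L([\xi,[\xi,\eta]]) \;+\; \cdots .
\end{equation*}
Because $E \subset \frakq_0 \subset \frakj + \frakq_0 \subset H = \ker L$, the constant term $L(\eta)$ already vanishes, so the defining relation of $S_L$ reduces to a polynomial equation in $\xi$ whose linear part is exactly $L([\xi,\eta])$.

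To upgrade this expansion to the claimed set-theoretic inclusion $S_L \subseteq \widehat{S}_L$, I would adopt the one-parameter subgroup viewpoint already sketched in the paragraphs preceding the proposition. A $\xi$ belonging to the boundary variety is read as saying that the flow $g_X(t) = \exp(t\xi)$ carries the tangent space $T_{[C]} G_{p_0}.[C]$, represented by the image of $E$ in $\frakg/\frakj$, into $H$ along the full curve. Writing $L(\Ad(\exp(t\xi))\eta) \equiv 0$ in $t$ and differentiating at $t = 0$ annihilates every higher-order term of the series above and leaves $L([\xi,\eta]) = 0$. Running this argument for every $\eta \in E$ gives $\xi \in \widehat{S}_L$.

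The main point to be careful about — really more of a conceptual subtlety than a technical one — is the precise reading of ``boundary variety''. If one were to take $S_L$ as the naive zero set of the genuinely nonlinear equations $L(e^{\ad(\xi)}\eta) = 0$ on $\xi$ alone, then a solution would not automatically satisfy the linearization and the inclusion $S_L \subseteq \widehat{S}_L$ would fail in general. The proposition is compatible only with the interpretation in which the tangency to $H$ persists along the one-parameter subgroup through $\xi$, where the vanishing of the linear Taylor coefficient at $t = 0$ is a bona fide necessary condition. With that reading in place, $\widehat{S}_L$ is cut out by an explicit linear system in the root-space coordinates of $A$, which is precisely what is needed to feed into the root-theoretic estimate of the next subsection.
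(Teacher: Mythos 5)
Your proposal is correct and takes essentially the same route as the paper: the paper's entire proof is the one-line computation $\frac{d}{dt}\big\vert_{t=0}L\bigl(\Ad(\exp(t\xi))(\eta)\bigr)=L\bigl(\frac{d}{dt}\big\vert_{t=0}\Ad(\exp(t\xi))(\eta)\bigr)=L([\xi,\eta])$, i.e.\ differentiation at $t=0$ with the derivative pulled inside by linearity of $L$, exactly as in your second paragraph. The interpretational caveat you flag is apt but does not separate you from the source: the paper's definition of $S_L$ together with its one-line proof carries the same implicit reading (tangency persisting along the one-parameter subgroup $g_X(t)=\exp(t\xi)$, as set up in the paragraph preceding the proposition) that you make explicit.
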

\begin {proof}
$$
\frac{d}{dt}\Big\vert_{t=0}L(Ad(exp(t\xi)(\eta)))=L(\frac{d}{dt}\Big\vert_{t=0}Ad(exp(t\xi)(\eta)))=L([\xi,\eta])\,.
$$
\end {proof}
\subsection {On the maximum dimension of $\widehat{S}_L$}
For a generic linear function $L$ it is difficult to compute $\widehat{S}_L$.  However, for $L=L_\alpha =f_\alpha $ this job can
be translated to a manageable root-combinatoric.  The following fact is therefore quite useful.
\begin {proposition}\label{maximum}
There exits $\alpha \in \Phi$ such that
$$
max_{L}\{dim_\mathbb{C}\widehat{S}_L\}=dim_\mathbb{C}S_{L_\alpha}\,.
$$
\end {proposition}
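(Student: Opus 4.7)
Since $S_L\subseteq \widehat{S}_L$ by construction, for every $\alpha\in\Phi$ we have
\[
\dim_{\CC}S_{L_\alpha}\le \dim_{\CC}\widehat{S}_{L_\alpha}\le \max_L\dim_{\CC}\widehat{S}_L,
\]
and the proposition asks for a single $\alpha$ for which equality propagates all the way. My plan splits into two steps.

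First, I would produce an $\alpha$ for which the outer maximum is realized by the coordinate functional $L_\alpha=f_\alpha$. The space $\widehat{S}_L$ is the kernel of the linear map $\phi_L:A\to E^*$, $\xi\mapsto L([\xi,\cdot\,])$, that itself depends linearly on $L$; hence $\dim_{\CC}\widehat{S}_L$ is upper semi-continuous in $L$. The maximal torus $T$ acts on $A$ preserving the weight decomposition $A=\bigoplus_{\alpha\in\Phi}\frakg_\alpha$ and dually on $A^*=\bigoplus_{\alpha\in\Phi}\CC f_\alpha$, scaling each coefficient of $L=\sum a_\alpha f_\alpha$ by its character $\chi_\alpha$. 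Since $\widehat{S}_{t\cdot L}=\Ad(t^{-1})\widehat{S}_L$, the function $L\mapsto\dim_{\CC}\widehat{S}_L$ is $T$-invariant; choosing a one-parameter subgroup of $T$ that separates the characters $\chi_\alpha$ over the support of a maximizer $L^*$ and letting $t\to\infty$ degenerates $L^*$ to a nonzero multiple of some $f_\alpha$, and upper semi-continuity gives $\dim_{\CC}\widehat{S}_{L_\alpha}\ge\dim_{\CC}\widehat{S}_{L^*}$, so this $\alpha$ realizes the outer maximum.

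Second, I would show $\dim_{\CC}S_{L_\alpha}=\dim_{\CC}\widehat{S}_{L_\alpha}$ for the $\alpha$ produced in the first step. Expanding
\[
\Ad(\exp\xi)(\eta)=\sum_{k\ge 0}\frac{1}{k!}(\ad\xi)^k\eta
\]
and reading off the $\alpha$-weight component yields a sequence of homogeneous equations $P_k(\xi)=0$, $k\ge 1$, cutting out $S_{L_\alpha}$; here $P_1$ is exactly the defining equation of $\widehat{S}_{L_\alpha}$, and the $k=0$ term vanishes because $\alpha\notin\Gamma$ (indeed $\frakg_\alpha\subseteq A$ is disjoint from $\frakq_0\supseteq E$, so $\Phi\cap\Gamma=\emptyset$). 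Each monomial in $P_k$, $k\ge 2$, has the form $\xi_{\beta_1}\cdots\xi_{\beta_k}\eta_\gamma$ with $\beta_1+\cdots+\beta_k+\gamma=\alpha$, weighted by a nested structure constant that vanishes whenever some partial sum $\beta_j+\cdots+\beta_1+\gamma$ fails to be a $\frakg$-root. On $\widehat{S}_{L_\alpha}$ the coordinate $\xi_\beta$ is forced to zero whenever $\alpha-\beta\in\Gamma$ and the corresponding bracket is nontrivial. I would argue that for a correctly chosen $\alpha$ — extremal in a $\frakt$-ordering compatible with the splitting $\frakg=(\frakj+\frakq_0)\oplus A$ — every surviving monomial in $P_k$ for $k\ge 2$ has a vanishing structure constant, so $P_k\equiv 0$ on $\widehat{S}_{L_\alpha}$, giving $\widehat{S}_{L_\alpha}\subseteq S_{L_\alpha}$ and the required dimensional equality.

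The main obstacle is precisely this exactness of the linearization at a coordinate functional. The combinatorial vanishing of the higher Taylor coefficients of $L_\alpha\circ\Ad(\exp\xi)|_E$ on the linear subspace $\widehat{S}_{L_\alpha}$ is the genuine content; the selection of $\alpha$ in the first step likely has to be refined by a supplementary extremality condition in a fixed root ordering (so the two steps become interdependent), and I would expect the verification to split into cases according to whether $\frakj=\frakk$ or $\frakj$ is the parabolic $\mathfrak{p}$, using the $\frakt$-root pattern of $\frakg$ in each setting.
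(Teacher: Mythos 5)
Your first step is, essentially verbatim, the paper's entire proof of Proposition \ref{maximum}: the paper forms the incidence variety $\mathcal{K}=\varphi^{-1}(0)\subset\mathcal{L}\times A$ whose fibers over $\mathcal{L}$ are the linear kernels $\widehat{S}_L$, notes the equivariance $\widehat{S}_{t(L)}=t(\widehat{S}_L)$, degenerates a maximizing $L$ along the torus to a $T$-fixed point of $\PP(\mathcal{L})$, and combines semicontinuity of fiber dimension with the fact that the characters $\chi_\alpha$ are pairwise distinct to conclude that the limit functional is a coordinate functional $L_\alpha$. Your one-parameter-subgroup degeneration plus upper semicontinuity of kernel dimension is the same argument in slightly more elementary packaging (just note that you must rescale, i.e.\ work projectively as the paper does, so that the limit of $t\cdot L^*$ exists and is a nonzero multiple of $f_\alpha$).

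Your second step is where the problem lies, and it is a genuine gap: you give no argument that the higher-order coefficients $P_k$, $k\ge 2$, vanish on $\widehat{S}_{L_\alpha}$, and in general they need not. Nothing in the root combinatorics forbids $\beta_1,\beta_2\in\Phi\setminus At(\alpha)$ and $\gamma\in\Gamma$ with $\beta_2+\gamma$ a root and $\beta_1+\beta_2+\gamma=\alpha$ with nonzero structure constants; in that case the quadratic term is not identically zero on $\widehat{S}_{L_\alpha}$, and since $\widehat{S}_{L_\alpha}$ is an irreducible (linear) variety, $S_{L_\alpha}$ would then have strictly smaller dimension, so no refinement of the choice of $\alpha$ by an ordering can be guaranteed to rescue the claim. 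However, you should know that this second step is also absent from the paper: since the fibers of $\mathcal{K}\to\mathcal{L}$ are the $\widehat{S}_L$ and not the $S_L$, what the paper's proof actually establishes is $\max_L\dim_{\CC}\widehat{S}_L=\dim_{\CC}\widehat{S}_{L_\alpha}$; the occurrences of $S_{t(L)}$, $S_{L_0}$ in that proof, and of $S_{L_\alpha}$ in the statement itself, are notational slips for $\widehat{S}$. Nor is the stronger literal statement ever used: the proof of Theorem \ref{main} (hence Corollary \ref{pseudoconcave}) only needs the chain $\dim_{\CC}S_L\le\dim_{\CC}\widehat{S}_L\le\dim_{\CC}\widehat{S}_{L_\alpha}\le\codim_{\CC}R_\alpha$. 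So your step one already proves the proposition as intended and as applied, while your step two attacks a strictly stronger assertion (exactness of the linearization at a coordinate functional) that is unproven in your proposal and unproven --- and unneeded --- in the paper.
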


\noindent
Preparing for the proof,  we first note that in this setting we can view $L$ as varying in projective space so that the maximum is taken on.
Secondly, since $E$ is $T$-invariant, for $t\in T$ it follows that 
$$
t(L)([t(\xi),\eta])=0\ \text{for all} \ \eta \in E \ \Longleftrightarrow \ L([\xi,\eta ])=0 \ \text{for all}\ \eta \in E\,.
$$
Thus $\widehat{S}_{t(L)}=t(\widehat {S}_L)$.\\

\noindent
\textit{Proof of Proposition \ref{maximum}}\\
Let $\eta_1,\ldots \eta_n$ be any basis of $E$ and for $L$ given define $\varphi_L:A\to \mathbb{C}^n$ by 
$\xi\mapsto (L([\xi,\eta_1]),\ldots ,L([\xi, \eta_n]))$.  Then let $\mathcal{L}$ be the space of all $L$ and define
$\mathcal{X}:=\mathcal{L}\times A$ with $\varphi :\mathcal{X}\to \mathbb{C}^n$ defined by 
$\varphi (L,\xi)=\varphi_L(\xi)$.  Finally,  let $\mathcal{K}:=\varphi^{-1}(0)$
and consider the $T$-equivariant surjective projection $\mathcal{K}\to \mathcal{L}$.  Assume that the maximum
dimension $d$ is taken on for the function $L$.  It follows from Proposition \ref{maximum} that $dim_\mathcal{C}S_{t(L)}=d$ for all $t\in T$.
Now let $t(L)$ tend to a point $L_0$ which (in the projective space) is $T$-fixed.  By semicontinuity of fiber
dimension it follows that $dim_\mathcal{C}S_{L_0}\ge d$.  But by the choice of $d$ being maximal, equality
follows.  Consequently, the maximum dimension is taken on by a function $L_0$ with is projectively a $T$-fixed
point  But $L_0=\sum a_\alpha L_\alpha $ transforms by $t(L_0)=\sum a_\alpha \chi_\alpha(t)L_\alpha $ where
$\chi_\alpha $ is the character associated to the root $\alpha $.  Since $\chi_\alpha=\chi_\beta $ if and only if
$\alpha=\beta $, it follows that $L_0=L_\alpha $ for some $\alpha $.\qed
\subsection {Lower bound of pseudoconcavity}
The \emph{attractiveness} of  root $\alpha \in \Phi $ is defined to by
$$
At(\alpha)=\{\beta\in \Lambda (\mathfrak{u}^-): \ \text{there exists}\ \gamma \in \Gamma \ \text{with}\ \gamma +\beta =\alpha\}\,.
$$
Note that if $At(\alpha)=\{\beta_1,\ldots ,\beta_N\}$, then there are unique pairwise different
$\gamma_1,\ldots ,\gamma_N$ with $\alpha=\beta_i+\gamma_i$, $i=1,\ldots ,N$.  Let $R_\alpha$ be the subspace
of $A$ (regarded in the cycle $C$) which is the span of the $e_{\beta_i}$.  By construction it follows that 
$R_\alpha $ has trivial intersection with $\widehat{S}_{L_\alpha}$.  Since the $\beta_i$ are independent, $dim_\mathbb{C}R_\alpha =\vert At(\alpha)\vert$.
Going through the logic, we then have that the maximal dimension $max_L\{dim_\mathbb{C}{S}_L\}$ is achieved by some $S_{L_\alpha}$. 
Its dimension is at most that of $\widehat{S}_{L_{\alpha}}$ which is in turn less than or equal to the
$codim_\mathbb{C}R_\alpha $.  Thus we let $\alpha_{min}$ be any root in $\Phi $ with minimal attractiveness and define
$d_{ma}:=\vert At(\alpha_{min})\vert$.
\begin {theorem} \label{main}
For every $L\in \mathcal{L}$ it follows that $codim_\mathbb{C}S_L\ge d_{ma}\,.$
\end {theorem}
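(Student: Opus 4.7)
The plan is to chain the reductions set up in the paragraphs preceding the theorem. First, since $S_L \subseteq \widehat{S}_L$ by the linearization proposition, one has $\codim_\mathbb{C} S_L \ge \codim_\mathbb{C} \widehat{S}_L$, so it suffices to prove the stronger inequality $\codim_\mathbb{C} \widehat{S}_L \ge d_{ma}$ for every $L \in \mathcal{L}$. Proposition \ref{maximum} supplies a root $\alpha^* \in \Phi$ at which $\dim_\mathbb{C} \widehat{S}_L$ attains its maximum over $\mathcal{L}$; equivalently, for every $L$ one has $\codim_\mathbb{C} \widehat{S}_L \ge \codim_\mathbb{C} \widehat{S}_{L_{\alpha^*}}$. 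This reduces the whole problem to producing a lower bound $\codim_\mathbb{C} \widehat{S}_{L_{\alpha^*}} \ge d_{ma}$.

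Next I would carry out the transversality step already hinted at in the definition of $R_\alpha$: namely, that $R_{\alpha^*} \cap \widehat{S}_{L_{\alpha^*}} = \{0\}$. Writing $At(\alpha^*) = \{\beta_1, \ldots, \beta_N\}$ with the uniquely determined partners $\gamma_1, \ldots, \gamma_N \in \Gamma$ satisfying $\beta_i + \gamma_i = \alpha^*$, and taking an arbitrary $\xi = \sum_i c_i e_{\beta_i} \in R_{\alpha^*}$, I would evaluate $L_{\alpha^*}([\xi, e_{\gamma_j}])$ for each $j$. The $e_{\alpha^*}$-component of $[e_{\beta_i}, e_{\gamma_j}]$ vanishes unless $\beta_i + \gamma_j = \alpha^*$, and the uniqueness built into the definition of $At(\alpha^*)$ forces $i = j$; then $[e_{\beta_j}, e_{\gamma_j}]$ is a nonzero multiple of $e_{\alpha^*}$ because $\mathfrak{g}_{\alpha^*} = [\mathfrak{g}_{\beta_j}, \mathfrak{g}_{\gamma_j}]$ whenever $\beta_j + \gamma_j$ is a root. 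Hence the pairing matrix $\left(L_{\alpha^*}([e_{\beta_i}, e_{\gamma_j}])\right)_{i,j}$ is diagonal with nonzero diagonal entries, and $\xi \in \widehat{S}_{L_{\alpha^*}}$ forces every $c_j$ to vanish.

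Assembling, $R_{\alpha^*} \cap \widehat{S}_{L_{\alpha^*}} = \{0\}$ together with $\dim_\mathbb{C} R_{\alpha^*} = |At(\alpha^*)|$ yields $\codim_\mathbb{C} \widehat{S}_{L_{\alpha^*}} \ge |At(\alpha^*)| \ge |At(\alpha_{\min})| = d_{ma}$ by the minimality defining $\alpha_{\min}$, and combining with the two reductions of the first paragraph finishes the proof. The main obstacle I anticipate is ensuring the bracket computation in the transversality step is genuinely airtight: one needs the structure constants $N_{\beta_j, \gamma_j}$ to be nonzero, which is a standard but not automatic fact about semisimple Lie algebras (it holds because $\beta_j + \gamma_j = \alpha^* \in \Phi$ is a root, so the root-space bracket is nontrivial). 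One also has to be careful that $L_{\alpha^*}$ is being evaluated on a bracket that may a priori live outside $A$, but since $[\mathfrak{g}_{\beta_i}, \mathfrak{g}_{\gamma_j}] \subseteq \mathfrak{g}_{\beta_i+\gamma_j}$ is a root space contained in $A$ whenever it hits $\mathfrak{g}_{\alpha^*}$, this causes no trouble. Beyond these structural observations, the argument is purely linear algebra plus the semicontinuity already packaged in Proposition \ref{maximum}.
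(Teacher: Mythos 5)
Your proposal is correct and follows essentially the same route as the paper's proof: the chain $S_L\subseteq \widehat{S}_L$, the reduction via Proposition \ref{maximum} to a single root functional $L_{\alpha^*}$, and the transversality $R_{\alpha^*}\cap \widehat{S}_{L_{\alpha^*}}=\{0\}$ combined with $\vert At(\alpha^*)\vert \ge \vert At(\alpha_{\min})\vert = d_{ma}$. The only difference is that you spell out the structure-constant computation (the diagonal, nonvanishing pairing matrix coming from the uniqueness of the partners $\gamma_i$ and $[\mathfrak{g}_{\beta},\mathfrak{g}_{\gamma}]=\mathfrak{g}_{\beta+\gamma}$ for roots with $\beta+\gamma$ a root), which the paper asserts merely ``by construction.''
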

\begin {proof}
Since 
$
R_\alpha \cap S_{L_{\alpha}}=0
$
and $d_{ma}\le dim_\mathbb{C}R_\alpha $ for all $\alpha $, the proof follows immediately from the above results.
\end {proof}

\noindent
Note that $d_{ma}$ is an invariant which only depends on the $K$-orbit $C_0$ in $Z$, i.e., not on the choice of the domain
$B$ in $\mathcal{M}_Z$ at the outset of the construction. 
\begin {corollary}\label{pseudoconcave}
The pseudoconcavity $c(Y) $ is at least the dimension of the span $R_\alpha $ of the root spaces
$\mathfrak{g}_\beta $ for $\beta \in At(\alpha )$ where $\alpha $ is a root of minimum attractiveness, i.e.,
$c(Y)\ge d_{ma}\,.$
\end {corollary}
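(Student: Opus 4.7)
The plan is to convert the codimension bound of Theorem \ref{main} into an explicit polydisk in a supporting cycle at each boundary point of $Y$.

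First, fix $p_0 \in \partial Y$. Since $\partial Y \subseteq \mu(\partial X)$, there is a supporting cycle $[C] \in \partial B$ with $p_0 \in C$. Parameterize $C$ near $p_0$ by the chart $\xi \mapsto \exp(\xi)\cdot p_0$, $\xi \in \mathfrak{u}^-$, introduced in Section 5, and let $L$ be the linear form cutting out the complex tangent hyperplane $H_{[C]}$ to $\partial B$ at $[C]$. The key geometric step is to show that $\xi \notin S_L$ already forces $p(\xi) := \exp(\xi)\cdot p_0 \in Y$. Indeed, $\xi \notin S_L$ says that the tangent space $V_{p(\xi)}$ to the orbit $G_{p(\xi)}\cdot[C]$ at $[C]$ is not contained in $H_{[C]}$; since $V_{p(\xi)}$ is a complex subspace and $H_{[C]}$ is the maximal complex subspace of $T_{[C]}\partial B$, this automatically forces $V_{p(\xi)} \not\subset T_{[C]}\partial B$, so the orbit is real-transverse to $\partial B$ at $[C]$. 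Consequently $G_{p(\xi)}\cdot[C]$ meets $B$ in some $[C']$ arbitrarily near $[C]$; writing $[C'] = g\cdot[C]$ with $g \in G_{p(\xi)}$ gives $p(\xi) \in g\cdot C = C'$, so $(p(\xi),[C']) \in X$ and thus $p(\xi) \in Y$.

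Next, I would invoke Theorem \ref{main} for the present $L$: $\codim_{\mathbb{C}} S_L \ge d_{ma}$. In fact, its proof yields the same bound at the linearized level, $\codim_{\mathbb{C}} \widehat{S}_L \ge d_{ma}$. Accordingly, choose a complex linear complement $R$ to $\widehat{S}_L$ in the chart of dimension $d_{ma}$; then $R \cap \widehat{S}_L = \{0\}$ and \emph{a fortiori} $R \cap S_L = \{0\}$ globally. For $r>0$ sufficiently small, the polydisk $\Delta \subset R$ of radius $r$ centered at the origin defines, via the exponential chart, a holomorphic map $F : \Delta_{d_{ma}}(r) \to C$ with $F(0)=p_0$ whose punctured image lies in $Y$ by the previous paragraph. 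This yields $c(C,p_0) \ge d_{ma}$, and taking the infimum over $p_0 \in \partial Y$ delivers $c(Y) \ge d_{ma}$.

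The main obstacle is the bookkeeping between the chart coordinate $\xi \in \mathfrak{u}^-$ on the cycle $C$ and the ambient root-theoretic setup of Section 5, in which $S_L$ and $\widehat{S}_L$ are defined inside the $\mathfrak{t}$-invariant complement $A$ of $\mathfrak{j}+\mathfrak{q}_0$ in $\mathfrak{g}$. One must confirm that a linear complement to $\widehat{S}_L$ in the correct ambient space corresponds, under the identification of $\mathfrak{u}^-$ with the relevant root directions along $C$, to a genuine holomorphic polydisk inside the cycle (compatibly with the $\mathfrak{t}$-decomposition used to produce $d_{ma}$ via the root spaces $\mathfrak{g}_\beta$, $\beta \in At(\alpha)$). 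Once this compatibility is in place, the proof reduces to the transversality observation above combined with Theorem \ref{main}.
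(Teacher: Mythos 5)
Your proposal is correct and is essentially the paper's own (largely implicit) argument: the paper states Corollary \ref{pseudoconcave} without separate proof as an immediate consequence of Theorem \ref{main}, and you supply exactly the intended glue, namely the transversality step from the proofs of the propositions of Section 4 (a complex tangent space $V_{p}\not\subset H_{[C]}$ cannot lie in $T_{[C]}\partial B$, so the orbit $G_{p}.[C]$ meets $B$ and hence $p\in Y$) together with a linear complement to $\widehat{S}_L$, whose codimension bound $\ge d_{ma}$ for \emph{every} $L$ is precisely what the proof of Theorem \ref{main} establishes. The ``bookkeeping'' issue you flag is sloppiness in the paper's own notation rather than a gap in your argument: the boundary variety $S_L$, its linearization $\widehat{S}_L$, and the spaces $R_\alpha$ all live in the chart $\mathfrak{u}^-$ of the cycle (as the Grassmannian computations of Section 6 confirm), so a linear complement taken there does yield a holomorphic polydisk in $C$ through $p_0$.
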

\noindent
\textbf{Remark.} In closing we should remind the reader that all results in this paragraph were proved under the assumption
that $rank(G)=rank(K)$.

\section{Explicit lower bounds in Grassmannians}
Here we compute explicit lower bounds for the pseudoconcavity of flag domains in Grassmannians in
the equal rank cases of the mixed signature unitary and real symplectic groups.   Although it is possible
that the stabilizer of the base cycle is a parabolic group $P$ containing $K$, since the methods for that
case are exactly the same as those which the stabilizer is $K$, we leave that case to the reader.
\subsection{$SU(p,p^\prime)$-orbits in Grassmannians}
Let $SU(p,p^\prime)$ denote the real form of $SL_{n}(\mathbb{C)}$ consisting
of transformations which preserve the mixed signature Hermitian form given by 
$h(z,w)=z^tE_{p,p^\prime}\bar{w}$ where $E_{p,p^\prime}=Diag(Id_p,-Id_{p^\prime})$.
The flag domains in the Grassmannian 
$Z:=Gr_k(\mathbb{C}^n)$ of $k$-dimensional subspaces of $\mathbb{C}^n$ are
parameterized by pairs $(r,r^\prime)$ of non-negative integers with $r\le p$ and $r^\prime \le p^\prime $:
$D_{r,r^\prime}=\{V\in Z: sign(h\vert V)=(r,r^\prime)\}$.

\bigskip\noindent
As we have seen above, we may choose $K$ in any convenient form and $C_0$ to be its base cycle.
Thus we choose $K$ to have matrices in block form which looks like
\begin {gather*}
\begin {pmatrix}
K_1 & 0\\
0 & K_2
\end {pmatrix}
\end {gather*}
where $K_1$ (resp. $K_2$) is a $p\times p$-matrix (resp. $p^\prime\times p^\prime$-matrix) in a standard basis $(e_1,\ldots ,e_p,f_1,\ldots ,f_{p^\prime})$.
In the above estimate, the dimension of $S_H$ is maximized over all $H$ and consequently we may choose base point $p_0\in C_0$ at our convenience.
We do so by letting $p_0:=Span\{e_1,\ldots, e_r,f_1,\ldots ,f_{r^\prime}\}$.  The unipotent group that we will use 
to move $p_0$ to nearby points looks like
\begin {gather*}
K_1=
\begin {pmatrix}
Id_{r\times r} & 0\\
A  & Id_{(p-r)\times (p-r)}
\end {pmatrix}
\ \text{and} \ K_2=
\begin {pmatrix}
Id_{s\times s} & 0\\
B & Id_{(p^\prime-r^\prime)\times (p^\prime-r^\prime)}
\end {pmatrix}
\end {gather*}
where $A$ and $B$ are arbitrary matrices of the appropriate sizes.  Of course the $K$-isotropy group at the base point is
complementary to these groups.  In addition to the part in $K$ the isotropy group $Q_0$ contains the unipotent group of matrices of the form
\begin {gather*}
\begin {pmatrix}
Id_{p\times p} & U_+\\
U_- & Id_{p^\prime \times p^\prime}
\end {pmatrix}
\end {gather*}
where for example  $U_+$ consists of matrices of the form
\begin {gather*}
\begin {pmatrix}
a & b\\
0 & c
\end {pmatrix}
.
\end {gather*}
Here the entries of the matrices $a $, $b$ and $c$ are arbitrary complex numbers.

\bigskip\noindent
Now our job is to conjugate $U_+$ and $U_-$ with matrices complementary to the isotropy group $K_{p_0}$
and determine what is obtained in the block, e.g., of the $U_+$-picture, where above there is $0$.  For example
for $U_+$ we have
\begin{gather*}
K_1U_+K_2^{-1}=
\begin {pmatrix}
Id & 0\\
A & Id
\end{pmatrix}
\begin{pmatrix}
a & b\\
0  & c
\end{pmatrix}
\begin{pmatrix}
Id & 0\\
-B & Id
\end{pmatrix}
\end{gather*}
so that the relevant block is $A(a -bB)-cB=R$ where $a$,$b$ and $c$ are arbitrary.   For $U_-$ we have an analogous block.

\bigskip\noindent
Now let $\mathfrak{g}_{ij}$ be a spot in the block of relevance of either $U_+$ or $U_-$, i.e., where originally we had $0$. 
The above general result shows that the minimum of the $At(\alpha)$ will be achieved when the linear defining function $f_\alpha $
is just the evaluation at such a spot.  If that $\alpha $ is this root, then the set $S(C)_\alpha $ is defined by the condition
$R_{ij}=0$ for all values of $a$, $b$ and $c$.  For example, this implies that $(Aa)_{ij}=0$ for all $a$ or
equivalently $A_{ik}a_{kj}=0$ for all $a_{kj}$.  In other words the $i$-th row of $A$ vanishes.   Analogously it follows that the $j$-th column of $B$
vanishes.

\bigskip\noindent
The above discussion was for $\mathfrak{g}_{ij}$ and an upper-triangular spot.   For a lower triangular spot the roles of $A$ and $B$ are reversed.
Thus we have the following remark.
\begin {proposition}
If $f_\alpha $ is the root  functional defined by the spot $\mathfrak{g}_{ij}$, then
$codim (S(C)_\alpha)$ is either the length of $A_{i*}$ plus the length of $B_{*j}$ or vice versa.
\end {proposition}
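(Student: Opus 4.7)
The plan is to make the equations defining $S(C)_\alpha$ explicit by unpacking the block $R=A(a-bB)-cB$ already derived from the conjugation $K_1 U_+ K_2^{-1}$, and then reading off which linear constraints on the cycle-direction parameters $(A,B)$ are forced by the requirement that the $(i,j)$ entry vanish identically in the free isotropy parameters $(a,b,c)$. Since $S(C)_\alpha$ is defined in the text as precisely this identical vanishing locus, the task reduces to explicit polynomial bookkeeping; no further conceptual input beyond the setup in the preceding subsection is needed.

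First I would expand
\[
R_{ij}=\sum_{k} A_{ik}\,a_{kj}\;-\;\sum_{k,\ell} A_{ik}\,b_{k\ell}\,B_{\ell j}\;-\;\sum_{\ell} c_{i\ell}\,B_{\ell j}
\]
and read it as a polynomial in the entries of $a,b,c$ whose coefficients are polynomial functions of the entries of $A$ and $B$. The coefficient of $a_{kj}$ is $A_{ik}$, so the vanishing of $R_{ij}$ for every choice of $a$ forces $A_{ik}=0$ for every $k$, i.e.\ the entire $i$-th row $A_{i*}$ must vanish. Symmetrically, the coefficient of $c_{i\ell}$ is $B_{\ell j}$, so arbitrariness of $c$ forces the $j$-th column $B_{*j}$ to vanish. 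I would then check that the mixed middle term $\sum A_{ik}b_{k\ell}B_{\ell j}$ vanishes automatically once these two conditions hold, since its coefficient $A_{ik}B_{\ell j}$ as a function of $b_{k\ell}$ is killed as soon as either factor is; hence the $b$-parameters impose no further constraints.

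Because the two families of linear equations thus found involve disjoint blocks of coordinates --- entries of $A$ on the one side, entries of $B$ on the other --- they are linearly independent, so the codimension of $S(C)_\alpha$ in $A$ is exactly the sum of the number of entries in $A_{i*}$ and the number in $B_{*j}$. For a lower-triangular spot, i.e.\ $\mathfrak{g}_{ij}$ sitting in the $U_-$ block, the analogous conjugation $K_2 U_- K_1^{-1}$ produces a relevant block of the same shape but with the roles of $A$ and $B$ (and thus of rows and columns) interchanged, which yields the \emph{vice versa} alternative in the statement.

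The only slightly delicate step is confirming that the quadratic $b$-term contributes no new constraint beyond those from $a$ and $c$; I expect this to be immediate, because the conditions from varying $a$ and $c$ already annihilate every monomial $A_{ik}B_{\ell j}$ that could appear as a coefficient of some $b_{k\ell}$. Once that is dispatched, the codimension claim is the standard fact that two coordinate subspaces cut out by disjoint sets of coordinate functionals meet transversally, and the case $U_-$ is obtained verbatim from the case $U_+$ by swapping the two factor groups.
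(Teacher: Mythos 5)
Your proposal is correct and follows essentially the same route as the paper: expand the block $R=A(a-bB)-cB$, read off the coefficients of the free parameters $a$ and $c$ to force the vanishing of the row $A_{i*}$ and the column $B_{*j}$, note the mixed $b$-term is then automatic, and swap the roles of $A$ and $B$ for a lower-triangular spot. Your explicit check that the quadratic term imposes no further constraints and that the two families of coordinate functionals are independent (so the codimension is exactly the sum) is a detail the paper leaves implicit, but it is the same argument.
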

\noindent
In the case where $\mathfrak{g}_{ij}$ is in the upper-triangular piece,  the total codimension (vanishing spots in $A$ and $B$) is then 
$r+p^\prime-r^\prime$.  In the opposite case it is $p-r+p^\prime$.  Thus, applying the estimate derived in the previous paragraph, we
have the following result. 
\begin {corollary}
For the $SU(p,p^\prime)$-flag domain $D_{r,r^\prime}$ it follows that
$$
c(Y)\ge min\{r+p^\prime-r^\prime, p-r+p^\prime\}\,.
$$ 
\end {corollary}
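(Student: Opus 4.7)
The plan is to invoke Corollary \ref{pseudoconcave}, which reduces the statement to producing the lower bound $d_{ma} \ge \min\{r+p'-r',\, p-r+p'\}$ for the root-theoretic invariant $d_{ma} = |At(\alpha_{min})|$ attached to the base cycle $C_0$ of $D_{r,r'}$. Thanks to the Proposition immediately preceding the Corollary, for any root $\alpha$ whose functional $f_\alpha$ is evaluation at a ``zero spot'' $\mathfrak{g}_{ij}$ of one of the off-diagonal unipotent blocks $U_+$ or $U_-$, the codimension of $S(C)_\alpha$ — equivalently $|At(\alpha)|$ — is a sum of a row length of $A$ and a column length of $B$ (or the transposed pair), determined entirely by the conjugation products $K_1 U_\pm K_2^{-1}$. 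So the task becomes a short enumeration over these two families of roots followed by a minimization.

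First I would treat the upper-triangular piece. The zero block of $U_+$ in the chosen basis has dimensions $(p-r)\times r'$, and the conjugation yields the relevant entry $R_{ij}$ of $R = A(a-bB) - cB$. Requiring $R_{ij} \equiv 0$ in the free parameters $a,b,c$ kills the $i$-th row of $A$ (contributing $r$ independent linear conditions) and the $j$-th column of $B$ (contributing $p'-r'$ conditions), giving $|At(\alpha)| = r + (p' - r')$ for every such $\alpha$. This is exactly the first term in the claimed minimum.

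Next I would treat the lower-triangular piece by the symmetric computation, swapping the roles of $A$ and $B$ in $K_2 U_- K_1^{-1}$ with the appropriately transposed block sizes; this yields the companion value $p - r + p'$ for every such $\alpha$, producing the second term in the minimum. Taking the smaller of the two gives
$$
d_{ma} \;=\; \min\{\,r + p' - r',\; p - r + p'\,\},
$$
and Corollary \ref{pseudoconcave} then delivers the bound $c(Y) \ge d_{ma}$.

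The only real obstacle is bookkeeping: one must verify that the two families of off-diagonal zero spots genuinely exhaust the candidates for $\alpha_{min}$, i.e.\ that no other root produces a strictly smaller $|At(\alpha)|$. This reduces to checking that roots lying within $\mathfrak{k}$ or within $\mathfrak{q}_0$ have already been excluded when the complement $A$ was formed in the earlier general construction, so that only the off-diagonal directions in $U_\pm$ remain, and that no cross-term $\alpha = \beta + \gamma$ with $\beta \in \Lambda(\mathfrak{u}^-)$ and $\gamma \in \Gamma$ can circumvent the two row/column vanishing conditions. Once this combinatorial check is in place, the corollary is immediate from the preceding analysis.
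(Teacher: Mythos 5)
Your strategy coincides with the paper's: reduce via Corollary \ref{pseudoconcave} to bounding $d_{ma}=\vert At(\alpha_{min})\vert$ from below, then evaluate $\vert At(\alpha)\vert$ for the root functionals $f_\alpha$ sitting at the zero spots of $U_+$ and $U_-$, using the conjugation $K_1U_+K_2^{-1}$ (resp.\ $K_2U_-K_1^{-1}$) and the row/column Proposition. Your upper-triangular computation is correct and agrees with the paper: forcing $R_{ij}\equiv 0$ in $a,b,c$ kills the $i$-th row of $A$ ($r$ entries, since $A$ is $(p-r)\times r$) and the $j$-th column of $B$ ($p'-r'$ entries, since $B$ is $(p'-r')\times r'$), giving $r+p'-r'$.

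The gap is in the lower-triangular step. The ``symmetric computation'' you describe does \emph{not} yield $p-r+p'$. Swapping the roles of $A$ and $B$ in $K_2U_-K_1^{-1}$ forces the $i$-th row of $B$ and the $j$-th column of $A$ to vanish; a row of $B$ has $r'$ entries and a column of $A$ has $p-r$ entries, so the value is $p-r+r'$. You have asserted the number appearing in the statement rather than the number your own computation produces ($p-r+p'$ would require a row of $B$ to have $p'$ entries, which it does not). Hence what your argument actually establishes is $c(Y)\ge \min\{r+p'-r',\,p-r+r'\}$, which is in general strictly smaller than the stated bound. The paper's text contains the same slip, but it contradicts the very Proposition both of you invoke (``length of $A_{i*}$ plus length of $B_{*j}$ or vice versa'') and also the paper's closing Remark that the bound vanishes in the HSS cases: for $D_{p,0}$ one has $r=p$, $r'=0$, so the corrected second term $p-r+r'$ is $0$, whereas the printed formula gives $\min\{p+p',\,p'\}=p'\neq 0$. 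In fact the printed inequality is false, not merely unproved: for $SU(1,2)$ and $D_{1,1}\subset Gr_2(\mathbb{C}^3)$ the supporting cycles are copies of $\mathbb{P}_1(\mathbb{C})$, so every polydisk contained in a cycle is at most $1$-dimensional and $c(Y)\le 1$, while the printed formula claims $c(Y)\ge 2$; the corrected formula gives the consistent bound $1$. So you should carry out the second conjugation honestly and state the conclusion as $c(Y)\ge \min\{r+p'-r',\,p-r+r'\}$.
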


\noindent
\textbf{Remarks.} The above was carried out under the assumption that $\Phi$ parameterizes the complement of $\mathfrak{k}\oplus \mathfrak{q}_0$.
In the case where the stabilizer algebra $\mathfrak{h}$ is a larger parabolic algebra, one obtains a much sharper estimate.  Note that in the cases where
$D$ is a HSS, i.e., $D_{r,0}$ and $D_{0,r^\prime}$, lower concavity bound is, as it should be, zero. This is due to the fact that  $\mathfrak{u}^-=0$.

\subsection{$Sp(2n, \mathbb{R})$-orbits in Grassmannians}
Let $G_0=Aut{(\mathbb{R}^{2n},h)}\cong Sp(2n,\mathbb{R})$ with a non-degenerate alternating form $h$.
Let $Z$ be a Grassmannian of isotropic $k$-planes in $\CC^{2n}$  with $k<n$ and let $D$ be the open $G_0$-orbit of planes of signature $(p,q)$ with $p+q=k$.
We may choose a basis $v_1,\cdots ,v_{2n}$ of $\mathcal{C}^{2n}$ satisfying $v_j=\overline{v_{n+j}}$ and $h(v_j,v_{n+k})=\delta_{jk}$
for $j\leq n$ so that
$$p_{0}=Span_{\mathbb{C}}\{v_1,\cdots, v_p,v_{2n-q+1},\cdots,v_{2n}\}$$
is a base point in $D$.
Let $\mathfrak{q}_0$ be the isotropy subalgebra of $\mathfrak{g}$ at $p_0$.
Then we have
\begin{gather*}
\mathfrak{k}=
\begin{pmatrix}
k&0\\
0&-{}^tk\\
\end{pmatrix}
\ \text{for}\ k\in \Mat(n,n).
\end{gather*}
The nilpotent subalgebra $\mathfrak{u}^-$ is the subalgebra of matrices of the form
\begin {gather*}
k=
\begin {pmatrix}
0 & 0 & 0\\
C_1 & 0 & 0\\
C_2 & C_3 & 0
\end {pmatrix}
\end {gather*}
where
$C_1\in\Mat(n-p-q,p)$ , $C_2\in\Mat{(q,p)}$ and $C_3\in\Mat{(q,n-p-q)}$.

\bigskip\noindent
We then have the decomposition $\frakg=\fraks^+\oplus \frakk\oplus \fraks^-$ where
\begin{align*}
&\fraks^+=\left\{\begin{pmatrix}
0&A^+\\
0&0\\
\end{pmatrix};\; A^+\in \Sym{(n)}\right\},\\
&\fraks^-=\left\{
\begin{pmatrix}
0&0\\
A^-&0\\
\end{pmatrix};\; A^-\in \Sym{(n)}\right\}
\end{align*}
and the parabolic subalgebra $\frakq_0$ decomposes into
$$\frakq_0=(\fraks^+\cap\frakq_0)\oplus (\frakk\cap\frakq_0)\oplus (\fraks^-\cap\frakq_0).$$
Here $\frakq_0\cap \fraks^+$ is the subalgebra of matrices of the form given by
$$
A^+=
\left(
\begin{array}{cc:c}
&&A^+_2\\
\multicolumn{2}{c}{\mbox{\smash{\LARGE $A^+_1$}}}&0\\ \hdashline
{}^tA^+_2&0&0
\end{array}
\right)
\quad \text{where } 
\begin{array}{l}
A^+_1\in\Sym(n-q),\\
A^+_2\in\Mat{(p,q)}.
\end{array}$$
Thus $\frakq_0\cap \fraks^-$ is the subalgebra of matrices of the form given by
$$
A^-=
\left(
\begin{array}{c:cc}
0&0&A^-_2\\
\hdashline
0&&\\ 
{}^tA^-_2&\multicolumn{2}{c}{\mbox{\smash{\LARGE $A^-_1$}}}
\end{array}
\right)
\quad \text{where } 
\begin{array}{l}
A^-_1\in\Sym(n-p),\\
A^-_2\in\Mat{(p,q)}.
\end{array}$$
Let $v^k_{\ell}=v^k\otimes v_{\ell}\in (\CC^{2n})^*\otimes \CC^{2n}$ and let $e_k$ be the dual of $v^k_k-v^{n+k}_{n+k}$ for $1\leq k\leq n$.
Then the set of the roots with respect to the maximal diagonal subalgebra is
$$\Lambda(\frakg)=\{\pm e_j\pm e_k, \pm2e_{\ell}\;|\; 1\leq j< k\leq n,\; 1\leq \ell\leq n \}.$$
Denoting  $\pm (e_j+ e_k)$ (resp. $e_j-e_k$) by $e^\pm_{j,k}$ (resp. $e^c_{j,k}$) and we have
\begin{align*}
&\Lambda(\fraks^{\pm})=\{e^{\pm}_{j,k}\}_{j\leq k}\\
&\Lambda(\fraks^+\cap \frakq_0)=
\left\{\begin{array}{l|l}
e^+_{j,k}&
\begin{array}{l}
1\leq j\leq k\leq n-q
\end{array}
\end{array}\right\}
\cup\left\{\begin{array}{l|l}
e^+_{j,k}&
\begin{array}{l}
1\leq j\leq p,\\
n-q< k\leq n
\end{array}
\end{array}\right\}\\
&\Lambda(\fraks^-\cap \frakq_0)=
\left\{\begin{array}{l|l}
e^-_{j,k}&
\begin{array}{l}
p< j\leq k\leq n
\end{array}
\end{array}\right\}
\cup\left\{\begin{array}{l|l}
e^-_{j,k}&
\begin{array}{l}
1\leq j\leq p,\\
n-q< k\leq n
\end{array}
\end{array}\right\}\\
&\Lambda(\fraku^-)=
\left\{\begin{array}{l|l}
e^c_{j,k}&
\begin{array}{l}
p< j\leq n-q,\\
1\leq k\leq p
\end{array}
\end{array}\right\}
\cup\left\{\begin{array}{l|l}
e^c_{j,k}&
\begin{array}{l}
n-q< j\leq n,\\
1\leq k\leq n-q
\end{array}
\end{array}\right\}
\end{align*}
We set $\Gamma$, $\Phi$, and $At(\alpha)$ for $\alpha\in \Phi$ as the previous section.
\begin{lemma}\label{card}
$\vert At(\alpha)\vert \geq\begin{cases}n-q&\text{if }\alpha\in\Phi\cap\Lambda(\fraks^+),\\ n-p&\text{if }\alpha\in\Phi\cap\Lambda(\fraks^-)\end{cases}$.
\end{lemma}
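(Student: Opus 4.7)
\noindent
The plan is an explicit combinatorial count of the decompositions $\alpha=\beta+\gamma$ with $\beta\in\Lambda(\fraku^-)$ and $\gamma\in\Gamma$, using the explicit index descriptions of $\Lambda(\fraks^\pm\cap\frakq_0)$ and $\Lambda(\fraku^-)$ that were just spelled out. By the symmetry $p\leftrightarrow q$, $\fraks^+\leftrightarrow\fraks^-$ that interchanges the two halves of the statement, it suffices to treat $\alpha\in\Phi\cap\Lambda(\fraks^+)$. First I would identify $\Phi\cap\Lambda(\fraks^+)=\Lambda(\fraks^+)\setminus\Lambda(\fraks^+\cap\frakq_0)$: from the listed ranges this is precisely $\{e^+_{j,k}:p<j\leq k,\ n-q<k\leq n\}$. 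Writing $\beta=e^c_{a,b}\in\Lambda(\fraku^-)$, root arithmetic forces $\gamma=\alpha-\beta=e_j+e_k+e_b-e_a$ to be a single root only when $a\in\{j,k\}$, in which case $\gamma=e_i+e_b$ with $i\in\{j,k\}\setminus\{a\}$ is positive and must therefore lie in $\Lambda(\fraks^+\cap\frakq_0)$.

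The main construction is the family $\beta_m:=e^c_{k,m}$, $m=1,\ldots,n-q$, which lies in $\Lambda(\fraku^-)$ since $k>n-q$, and which produces $\gamma_m=e_j+e_m$. The principal payoff is the subcase $p<j\leq n-q$: both indices of $\gamma_m$ then lie in $[1,n-q]$, so $\gamma_m$ automatically falls in the first family $\{e^+_{j',k'}:1\leq j'\leq k'\leq n-q\}$ of $\Lambda(\fraks^+\cap\frakq_0)$, and the construction produces all $n-q$ required decompositions at once. If instead $j>n-q$, then $\beta_m$ supplies only the $p$ values $m\leq p$ (via the second family of $\Lambda(\fraks^+\cap\frakq_0)$), and one must complete the count with the symmetric family $\beta'_m:=e^c_{j,m}$ giving $\gamma'_m=e_k+e_m$, together with any remaining decompositions coming from the first family $\{e^c_{a,b}:p<a\leq n-q,\,1\leq b\leq p\}$ of $\Lambda(\fraku^-)$. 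Pairwise distinctness of the produced $\beta$'s is transparent because their first index $a$ separates the sources.

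The step I expect to be the main obstacle is the subcase in which both $j$ and $k$ lie in $(n-q,n]$, and especially the long-root case $\alpha=2e_j$, where $\beta_m$ and $\beta'_m$ collapse to a single family. There the constructions above yield only $p$ decompositions in the obvious way, and recovering the asserted bound $n-q$ requires a finer bookkeeping of which short positive roots $e_j+e_m$ or $e_k+e_m$ actually sit in $\Lambda(\fraks^+\cap\frakq_0)$, together with a careful verification that the produced $\beta$'s really are all distinct elements of $\Lambda(\fraku^-)$. This case analysis, rather than the main construction, is where the substantive work of the proof lies.
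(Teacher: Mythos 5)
Your framework coincides with the paper's (a direct count of decompositions $\alpha=\beta+\gamma$ using the listed index ranges), and your mixed case $p<j\le n-q<k\le n$ --- the family $\beta_m=e^c_{k,m}$ with $\gamma_m=e_j+e_m$, $m=1,\ldots,n-q$ --- is exactly the paper's argument and is correct. The gap is the case you explicitly postpone: both indices of $\alpha$ in $(n-q,n]$, in particular the long roots $\alpha=2e_\ell$. This is not merely ``substantive work'' left to be written up; under the membership requirement you (rightly, according to the definition of $\Gamma$ as the roots of the complement $E$ of $\frakj\cap\frakq_0$ in $\frakq_0$) impose, namely $\gamma=\alpha-\beta\in\Lambda(\fraks^+\cap\frakq_0)$, the deferred case is false, so no bookkeeping can close it. Concretely, for $\alpha=2e_\ell$ with $n-q<\ell\le n$, root arithmetic forces $\beta=e^c_{\ell,m}$ with $1\le m\le n-q$ and $\gamma=e_\ell+e_m$; since $\ell>n-q$, this $\gamma$ lies in the first family of $\Lambda(\fraks^+\cap\frakq_0)$ never, and in the second family only when $m\le p$ (for $p<m\le n-q$ the root $e_\ell+e_m$ lies in $\Phi$, not in $\Gamma$). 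Hence with your reading
$$
At(2e_\ell)=\{e^c_{\ell,m}:1\le m\le p\},\qquad \vert At(2e_\ell)\vert =p,
$$
and $p<n-q$ strictly because $p+q=k<n$. The same computation gives exactly $2p$ for short roots with both indices $>n-q$, which can likewise be smaller than $n-q$.

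You should also know how the paper itself handles the case you flagged: it declares $\beta=e^c_{j,k}\in\Lambda(\fraku^-)$ to be in $At(\alpha)$ whenever $j=\ell$ or $m$, i.e.\ it checks only that $\alpha-\beta$ is a root of $\fraks^+$, not that it lies in $\Gamma$, and so concludes $At(2e_\ell)=\{e^c_{\ell,1},\ldots,e^c_{\ell,n-q}\}$. So the divergence sits precisely at your ``main obstacle'': either one reads $At(\alpha)$ with this weaker condition on $\gamma$, in which case your long-root case becomes immediate (but this conflicts with the stated definition of $E$, which is what the geometric argument about the tangent space to $G_{p_0}.[C]$ actually uses), or one keeps $\gamma\in\Gamma$ as you do, in which case the bounds in the lemma would have to be replaced by $p$ (resp.\ $q$), propagating to $c(Y)\ge\min\{p,q\}$ in the final corollary. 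Either way, your proposal as written does not establish the stated inequality, and the obstruction you sensed is a genuine inconsistency between the lemma's proof and the definition of attractiveness, not a technical difficulty of bookkeeping.
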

\begin{proof}
If $\alpha=e_{\ell ,m}^+\in\Phi\cap\Lambda(\fraks^+)$, then $n-q< m\leq n$ and $p<\ell\leq m$.
In this case, $\beta=e_{j,k}^c\in\Lambda(\fraku^-)$ is contained in $At(\alpha)$ if $j= \ell$ or $m$.
A long root $\alpha=e_{\ell,\ell}^+$ minimize $\vert At(\alpha)\vert$ where
$$
At(\alpha)=\{e_{\ell ,1}^c,\cdots ,e_{\ell ,n-q}^c\}.
$$
If $\alpha\in\Phi\cap\Lambda(\fraks^-)$, we analogously have 
$$
At(\alpha)=\{e^c_{p+1,\ell},\cdots ,e^c_{n,\ell}\}
$$
for a long root $\alpha=e_{\ell,\ell}^-$ which minimize $\vert At(\alpha)\vert$.
\end{proof}
\begin{corollary}
$
c(Y)\ge \min{\{n-p,n-q\}}
$ 
\end{corollary}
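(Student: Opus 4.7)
The plan is to combine Lemma \ref{card} with the general lower bound from Corollary \ref{pseudoconcave}. By that corollary, it suffices to show that
$$d_{ma}=\min_{\alpha\in\Phi}|At(\alpha)|\;\ge\;\min\{n-p,n-q\}.$$

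First I would unpack what $\Phi$ is in the present setting. Since we are in the equal rank case with isotropy $\mathfrak{j}=\mathfrak{k}$, the set $\Phi$ indexes a $\mathfrak{t}$-stable complement of $\mathfrak{k}\oplus\mathfrak{q}_0$ in $\mathfrak{g}$. Using the decomposition $\mathfrak{g}=\mathfrak{s}^+\oplus\mathfrak{k}\oplus\mathfrak{s}^-$ and the corresponding decomposition of $\mathfrak{q}_0$ recorded just above the lemma, we get
$$\Phi\;=\;\bigl(\Lambda(\mathfrak{s}^+)\setminus\Lambda(\mathfrak{s}^+\cap\mathfrak{q}_0)\bigr)\;\cup\;\bigl(\Lambda(\mathfrak{s}^-)\setminus\Lambda(\mathfrak{s}^-\cap\mathfrak{q}_0)\bigr).$$
In particular every $\alpha\in\Phi$ lies in $\Lambda(\mathfrak{s}^+)$ or in $\Lambda(\mathfrak{s}^-)$, so Lemma \ref{card} applies to each $\alpha\in\Phi$.

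Next I would simply combine the two cases of Lemma \ref{card}: if $\alpha\in\Phi\cap\Lambda(\mathfrak{s}^+)$ then $|At(\alpha)|\ge n-q$, and if $\alpha\in\Phi\cap\Lambda(\mathfrak{s}^-)$ then $|At(\alpha)|\ge n-p$. Taking the minimum over all $\alpha\in\Phi$ gives
$$d_{ma}\;=\;\min_{\alpha\in\Phi}|At(\alpha)|\;\ge\;\min\{n-p,\,n-q\}.$$
Corollary \ref{pseudoconcave} then yields $c(Y)\ge d_{ma}\ge\min\{n-p,n-q\}$, which is the claim.

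There is no real obstacle here; the only mild subtlety is to verify that $\Phi$ does consist only of roots from $\Lambda(\mathfrak{s}^+)\cup\Lambda(\mathfrak{s}^-)$ (i.e.\ that the complement of $\mathfrak{k}\oplus\mathfrak{q}_0$ in $\mathfrak{g}$ has no $\mathfrak{k}$-root contributions), which is transparent from the block descriptions of $\mathfrak{q}_0\cap\mathfrak{s}^\pm$ and the decomposition $\mathfrak{g}=\mathfrak{s}^+\oplus\mathfrak{k}\oplus\mathfrak{s}^-$. Note also that degenerate boundary cases (such as $p=n$ or $q=n$, where one of $n-p$, $n-q$ is zero) correspond exactly to the situation in which the relevant part of $\mathfrak{u}^-$ collapses and the domain fibers over a Hermitian symmetric space, consistent with the remark at the end of the $SU(p,p')$ subsection.
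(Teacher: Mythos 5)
Your proof is correct and follows exactly the route the paper intends: the corollary is an immediate consequence of Lemma \ref{card} (which bounds $\vert At(\alpha)\vert$ for every $\alpha\in\Phi$, since $\Phi\subset\Lambda(\fraks^+)\cup\Lambda(\fraks^-)$) combined with the general bound $c(Y)\ge d_{ma}$ of Corollary \ref{pseudoconcave}. Your extra care in verifying that $\Phi$ has no $\frakk$-root contributions is a worthwhile explicit check of a step the paper leaves implicit, but it is not a departure from the paper's argument.
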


\end{document}